\documentclass[11pt]{article}

\usepackage[utf8]{inputenc}
\usepackage[T1]{fontenc}
\usepackage[left=3cm,right=3cm,top=3cm,bottom=3cm]{geometry}

\setcounter{totalnumber}{1}
\usepackage[colorlinks=false]{hyperref}
\usepackage{amsmath, amsfonts,amsthm,amssymb}
\usepackage{bbm}
\usepackage{graphicx}
\usepackage{epstopdf}
\usepackage[font=footnotesize,labelfont=bf]{caption}
\usepackage{color}
\usepackage{customCommands}
\setcounter{totalnumber}{1}

\usepackage[sort&compress, numbers]{natbib}
\setlength{\bibsep}{0.5ex}
\usepackage{snapshot}
\usepackage{booktabs}
\usepackage{multicol}
\usepackage[boxed, ruled, lined]{algorithm2e}
\usepackage[usenames,dvipsnames,svgnames]{xcolor}
\usepackage{blindtext}
\usepackage{enumerate}
\usepackage{csquotes}
\usepackage[font=footnotesize]{subcaption}
\usepackage{tikz}
\usepackage{pgfplots}
\usetikzlibrary{arrows}
\usetikzlibrary{positioning}
\usetikzlibrary{calc}
\usetikzlibrary{shapes.geometric}
\usetikzlibrary{shapes.symbols}
\usetikzlibrary{matrix}
\usetikzlibrary{fit}
\usetikzlibrary{backgrounds}

\newtheorem{theorem}{Theorem}
\newtheorem{lemma}[theorem]{Lemma}
\newtheorem{proposition}[theorem]{Proposition}
\newtheorem{corollary}[theorem]{Corollary}
\theoremstyle{remark}
\newtheorem{remark}[theorem]{Remark}
\theoremstyle{example}

\theoremstyle{theorem}
\newtheorem*{problem*}{Problem}

\title{Mathematical Analysis of the 1D Model and Reconstruction Schemes for Magnetic Particle Imaging}

\author{Erb W.\footnote{joint first authors.}, Weinmann A.$^{*}$, Ahlborg M., Brandt C.,\\ Bringout G., Buzug T.M., 
Frikel J., Kaethner C.,\\ Knopp T., M\"arz T., M\"oddel M., Storath M., Weber A.}

\begin{document}
\maketitle

\newlength\figureheight
\newlength\figurewidth
\setlength\figureheight{0.15\textwidth}

\begin{abstract}	
Magnetic particle imaging (MPI) is a promising new in-vivo medical imaging modality in which distributions of super-paramagnetic nanoparticles are 
tracked based on their response in an applied magnetic field.
In this paper we provide a mathematical analysis of the modeled MPI operator in the univariate situation.
We provide a Hilbert space setup, in which the MPI operator is decomposed into simple building blocks and in which these building blocks are analyzed with 
respect to their mathematical properties.
In turn, we obtain an analysis of the MPI forward operator and, in particular, of its ill-posedness properties.
We further get that the singular values of the MPI core operator decrease exponentially.
We complement our analytic results by some numerical studies which, in particular, suggest a rapid decay of the singular values of the MPI operator.
\end{abstract}

\maketitle

\vspace{2mm}\noindent {\em AMS Subject Classification.} {Primary: 94A12, 92C55, 65R32; Secondary: 44A35, 94A08.} \\[-2mm]

\noindent {\em Keywords.} {Magnetic particle imaging, model-based reconstruction, reconstruction operator, inverse problems, ill-posedness.}

\section{Introduction}

Magnetic particle imaging (MPI) is an emerging medical imaging mo\-dality
whose goal is to determine the spatio-temporal distribution of super-paramagnetic nanoparticles by measuring their non-linear magnetization response 
to an applied magnetic field.
This imaging technology was invented by \mbox{B. Gleich} and J. Weizenecker and published first in 2005 \cite{GleichWeizenecker2005}.
The particle distribution in this modality is not measured directly, but indirectly by the voltage induced by the magnetization of the nanoparticles during the imaging process.
Being more precise, the non-linear response of super-paramagnetic nanoparticles to a temporally oscillating magnetic field is used.
The temporal change of the applied magnetic field results in a temporal change of the magnetization of the nanoparticles and this
change of magnetization induces a voltage in a given set of receive coils. In this way, the induced voltage in the receive coils reflects the distribution of the particles in space.

MPI is a promising imaging modality for biological and medical diagnostics
because of the following two features:
(i) MPI offers a high dynamic spatial and temporal resolution
\cite{KnoppBiederer_etal2010};
(ii) in contrast to many other tomographic methods, such as PET or SPECT
(cf. \cite{SPECT,PET}), MPI does not employ any ionizing radiation during the imaging process.
In addition, in MPI only the magnetic tracer material is imaged which avoids artifacts stemming from nearby tissue as in angiographic CT.
Potential applications for MPI are any tracer based diagnostics. Examples are quantitative stem cell imaging \cite{zheng2013quantitative} 
as well blood flow imaging and cancer detection \cite{BuzugKnopp2012}. 
In angiographic applications, for instance, the magnetic nanoparticles are injected into the blood stream in order to obtain a visualization of the blood flow and 
the vessel system by determining the spatio-temporal distribution of the nanoparticles.

As a basic reference on MPI, we refer to the book \cite{BuzugKnopp2012} and,
for early work, to the articles
\cite{GleichWeizenecker2005,GleichWeizeneckerBorgert2008,Weizenecker_etal2009}. The continuous 1D MPI model studied in this article was 
introduced and investigated in \cite{Rahemeretal2009}.
For a further broader overview about different aspects of MPI imaging we mention the papers
\cite{knopp2011prediction,Knopp_etal2011hm,Knopp_etal2010ec,sattel2009single, goodwill2012x,saritas2013magnetic,konkle2011development,ferguson2009optimization}
and the references therein. Among others, these articles provide information about different designs of MPI scanners, reconstruction methods in various MPI imaging setups as well as
questions concerning the design of suitable nanoparticles for MPI.
Challenges in MPI on its way to application in clinical practice are as well discussed in \cite{BuzugKnopp2012}.
Further actual developments and applications of this technology are given in \cite{Knopp2017,Panagiotopoulos2015}.

The MPI imaging process is commonly modeled as a linear operator equation which has to be inverted (in the sense of inverse problems) to reconstruct the 
tracer distribution from the measured voltage. Depending on the context, the corresponding MPI forward operator can be a system matrix describing a discretized imaging process, or
an integral operator in a continuous model. As this linear system is usually ill-posed, regularization methods have to be employed in the reconstruction process. Typical examples 
are Tikhonov regularization \cite{weizenecker2009three,knopp2010weighted} as well as iterative schemes such as Kaczmarz and Landweber iteration, see \cite{BuzugKnopp2012}.   
Variational regularization schemes for Magnetic Particle Imaging using edge-preserving priors are studied in \cite{storath2017edge}.

The present approaches to represent the MPI forward operator can be categorized into measurement-based approaches, e.g., \cite{WeizeneckerBorgertGleich2007,Rahmer_etal2012}
and model-based approaches, e.g., \cite{KnoppSattel_etal2010,Gruettner_etal2013,Lampe_etal2012}. Further, there exist also some hybrid methods in which these two types of approaches are combined
\cite{Gladiss2017,Gruettner_etal2011}.  
In the measurement-based approach, a basis (or, more generally, a dictionary) is considered and, for each basis element, the system response is measured physically. 
Then, the reconstruction consists in the (regularized) solution of the corresponding system of linear equations
with the columns of the system matrix being the measured system responses. 
The measurement process is time consuming in practice, in particular, when reconstructing  with higher resolution.
For the measurement based approach the receive signal of a small sample of the tracer material is acquired at every pixel position in the FOV. Therefore, the tracer has to be shifted with a robot. As calculated in \cite{Gruettner_etal2013} this takes days for 3D volumes. In order to reduce the time consumption, model-based approaches are of particular interest.

Presently, there are basically three related model-based approaches developed over the past years towards a continuous model for the MPI forward operator.
For a general overview, we refer to \cite{Gruettner_etal2013}.
All these approaches use Faraday's law of induction with respect to a volumetric coil,
see \cite{BuzugKnopp2012,KnoppDiss2010} for a derivation.
Combined with the Langevin model for the magnetization response, a description of the received voltage signal is obtained.
The idealized applied magnetic field consists of a combination of a static and a time-dependent field.
In the following, we describe these approaches in more detail.
In \cite{Rahemeretal2009}, Rahmer et al.\ consider a 1D scenario and study the Fourier series of the signal derived from a delta distribution.
They conclude that, in the case of ideal particles, the system function can be written in terms of Chebyshev polynomials.
They derive a reconstruction formula for the 1D signal. 
For the 3D scenario, the system matrices were studied empirically; the measurements suggest a resemblance to tensor product Chebychev polynomials \cite{Rahmer_etal2012}. In the papers \cite{KnoppSattel_etal2010,KnoppBiederer_etal2010}, 1D and 2D system matrices were modeled rather than measured. 
In this case, the time to calculate a system matrix amounts to a few minutes for simple particle models \cite{Gruettner_etal2013}. 
Goodwill and Conolly  \cite{GoodwillConolly2010, GoodwillConolly2011}
and Schomberg \cite{Schomberg2010} follow a geometric approach.
In \cite{GoodwillConolly2010, GoodwillConolly2011}, the received voltage signal is studied in terms of the trajectory of the so-called field free point (FFP).
The field free point is the time-dependent spatial location in which the applied magnetic field vanishes.
The approach of \cite{Schomberg2010} studies the integral of the received voltage signal instead of the voltage signal itself.
Goodwill and Conolly \cite{GoodwillConolly2010} observe that the field free point in the spatial (or x-space) domain
is uniquely determined by the magnetic drive field and vice versa. A central result is that the signal generation process can be described by a convolution operator.
Schomberg \cite{Schomberg2010} obtains a formulation of convolution type in which the kernel describes the magnetization response of the system.
Based on the x-space model, reconstruction formulas for 2D and 3D MPI are derived in \cite{marz2016model}. 
There, also an analysis of the MPI core operator in 2D and 3D is given. 
Further, in 2D and 3D MPI modelling the particular trajectory traversed by the field-free point plays an important role. 
Such curves and their application in MPI are studied in \cite{DeMarchiErbMarchetti2017,ErbKaethnerAhlborgBuzug2016,ErbKaethnerDenckerAhlborg2015,DenckerErb2017,Kaethner2016IEEE,Knoppetal2009}.

\subsection{Contributions} \vspace{-1mm}
In this paper, we analyze the continuous 1D MPI model from a mathematical point of view.
We make the following three contributions:
(i) we provide a mathematical setup for the MPI reconstruction problem in a Hilbert space setting;
(ii) we provide a mathematical analysis of the MPI forward operator and its building blocks;
(iii) we provide some numerical studies complementing our analytic results.

Concerning (i), we start out to decompose the continuous MPI forward operator into simple building blocks for which we 
provide a mathematical description in terms of operators on suitable Hilbert spaces. This formalization in turn 
allows us to access the forward operator mathematically.
Our main contribution is (ii). Based on the just mentioned decomposition, we first analyze the simple building blocks of the MPI operator and apply these 
results to analyzing the forward operator. In particular, we reveal that the continuous MPI problem in 1D is severely ill-posed.
Further, we obtain that the singular values of the MPI core operator decrease exponentially.
Concerning (iii), we complement our analytic findings by numerical experiments related to the singular value decomposition of the involved operators. In particular, our experiments suggest
a fast decay of the singular values for the entire MPI operator.

\subsection{Outline of the paper} \vspace{-1mm}
We conclude this introduction with recalling the continuous 1D MPI model in Section~\ref{sec:MPImodel}.
Then, in Section~\ref{sec:MathDesc}, we derive a mathematical setup on suitable Hilbert spaces
(Section~\ref{sec:MathFramework}) and decompose the imaging operator in its basic building blocks (Section~\ref{sec:DecompositionOps}).
In Section~\ref{sec:analysis}, we analyze the MPI forward operator.
We analyze its building blocks in Section~\ref{sec:analysisBasicOP} and
their composition as well the entire operator in Section~\ref{sec:AnalysisComposed}.
In Section~\ref{sec:numerics} we present our numerical studies.
Finally, we draw conclusions in Section~\ref{sec:Conclusion}.

\subsection{Modeling MPI - the 1D case}\label{sec:MPImodel}

We briefly describe the continuous MPI model in 1D. For details we refer to \cite{Rahemeretal2009}
or \cite{BuzugKnopp2012}.   

The goal of MPI is to recover the particle density $c(x)$ of super-paramagnetic iron oxid nanoparticles from 
a voltage induced in receive coils caused by the time-dependent
change in the particle magnetization. More precisely, in the univariate situation, the 
general continuous model equation reads 
\begin{equation} \label{sec2:eq1}
 u(t) = - \mu_0 \frac{d}{dt} \int_{\mathrm{eFOV}} \sigma(x) c(x) \Mm(H(x,t)) dx.
\end{equation}
Here, $u(t)$ denotes the voltage induced in the receive coils (up to subtraction of the signal obtained by measuring the generated field without particles), $\sigma(x)$ is the coil sensitivity, 
and $\Mm$ denotes the mean magnetic moment depending on an applied magnetic field $H(x,t)$. 

We adapt some simplifications for the one-dimensional case: we assume that 
the entire field of view of the MPI scanner is $\mathrm{eFOV} \subset \Rr$, and thus $x \in \Rr$. We assume that 
the particle distribution $c(x)$ is entirely supported in $\mathrm{eFOV}$. Further, 
the receive coil sensitivity is assumed to be constant, i.e. $\sigma(x) = \sigma_0$.
Then, the 1D model equation \eqref{sec2:eq1} can be written as 
\begin{equation} \label{sec2:eq2}
 u(t) = - \mu_0 \sigma_0\frac{d}{dt} \int_{\Rr} c(x) \Mm(H(x,t)) dx = \int_{\Rr} c(x) s(x,t) dx 
\end{equation}
with the system kernel
\begin{equation} \label{sec1:eq3}   
s(x,t) = - \mu_0 \sigma_0 \frac{d}{dt} \Mm(H(x,t)) = - \mu_0 \sigma_0 \frac{d \Mm}{dH} \frac{d}{dt}H(x,t).
\end{equation}

We further simplify \eqref{sec1:eq3}. To this end, we suppose that
the applied magnetic field $H(x,t)$ is ideally given by
\begin{equation}  \label{sec2:eq4}
H(x,t) = \underbrace{G x}_{H_S(x)} - \underbrace{ \gamma(t)}_{H_D(t)}.
\end{equation}
Here, $H_S(x) = G x $ gives a linear field  with gradient $G>0$, and $H_D(t)$ denotes a 
periodic magnetic field $H_D(t) = \gamma(t)$ with period $T>0$ and amplitude $A > 0$  (which does not depend on the space variable $x$). 
Further, the $T$-periodic function $\gamma(t)$ 
is assumed to be even such that $\gamma: [0,\frac{T}{2}] \to [-A,A]$ is bijective. 
The respective frequency of the field $H_D(t)$ is denoted by $\omega_0 = \frac{2 \pi}{T}$. In MPI, the linear field $H_S$ is called selection field, whereas
$H_D$ is called drive field.  

With these assumptions, the system kernel $s(x,t)$ can finally be written as
\begin{equation}  \label{sec2:eq5}
 s(x,t) = - \mu_0 \sigma_0 \Mm'(Gx - \gamma(t)) \gamma'(t). 
 \end{equation}

\section{Mathematical Description of 1D MPI} \label{sec:MathDesc}

The goal of this section is the  introduction of a mathematical setup
on Hilbert spaces to describe the model equation \eqref{sec2:eq2} as a linear equation on a Hilbert space. Further,
we aim at decomposing the imaging operator in its basic building blocks.

\subsection{Mathematical Framework}\label{sec:MathFramework}

In order to put the modeling equation \eqref{sec2:eq2} of the last section into a mathematical framework, we introduce the Hilbert spaces
\begin{align} \label{sec3:eq1}
 X^{\mathrm{space}} &= L^2(\Rr), \\ X^{\mathrm{time}} &= L^2([0, \textstyle T/2])
\end{align}
to represent (equivalence classes of) functions with arguments describing a spatial variable and a temporal variable, respectively.
Then, we introduce the integral operator
\begin{equation} \label{sec3:eq2}
 S^{\mathrm{t}}: X^{\mathrm{space}} \to X^{\mathrm{time}}, \quad S^{\mathrm{t}} c (t) = \int_{\Rr} c(x) s(x,t) dx.
\end{equation}

Using this notation, the 1D-MPI reconstruction problem given in equation \eqref{sec2:eq2} can be formulated as the following
inverse problem:

\begin{problem*}
Given an element $u\in R(S^{\mathrm{t}})$ in the range of the operator $S^{\mathrm{t}}$, find a
Hilbert space element $c \in X^{\mathrm{space}}$ such that
\begin{equation} \label{sec2:eq3}
S^{\mathrm{t}} c = u.
\end{equation}
\end{problem*}

Usually, the right hand side $u$ is measured in terms of its Fourier coefficients. Since the trajectory $\gamma(t)$ is assumed to be even, the voltage signal $u$ can be regarded as an odd $T$-periodic signal and expressed in terms of its Fourier-sinus coefficients $\hat{u}(n)$, $n \in \Nn$. They are given as
\[ \hat{u}(n) = \frac{2}{T} \int_{0}^{\frac{T}{2}} u(t) \sin (n \omega_0 t) \ dt, \quad n \in \Nn.\]
Introducing the Hilbert space $X^{\mathrm{freq}} = l^2(\Nn)$
and the operator
\[ S^{\mathrm{f}}: X^{\mathrm{space}} \to X^{\mathrm{freq}}, \quad S^{\mathrm{f}} c (n) = \widehat{S^{\mathrm{t}} c} (n),\]
the imaging equation \eqref{sec2:eq3} can be reformulated for given frequencies $\hat{u} \in X^{\mathrm{freq}}$ as
\begin{equation} \label{sec2:eq3b}
S^{\mathrm{f}} c = \hat{u}.
\end{equation}

We now investigate different reformulations and adaptions of the imaging
equations in \eqref{sec2:eq3} and \eqref{sec2:eq3b}.
We first introduce a new space variable $y$ with 
 $-\frac{A}{G} \leq y \leq \frac{A}{G}.$  
The corresponding transformation is given by
\[ y= \gamma_G(t)=\frac{1}{G} \gamma(t), \quad t \in \left[0, \frac{T}{2}\right].\]
In addition to the space variable $y$, we will use $\gamma_G(t)$ to denote the mapping between
$t$ and $y$. In this terminology, the inverse mapping is given by $t = \gamma_G^{-1}(y)$. 
Note that the new variable $y$ corresponds to the field-free point in the language of \cite{GoodwillConolly2010}.

In the following, the interval $[-\frac{A}{G}, \frac{A}{G}]$ will be referred to as field of view (FOV)
generated by the magnetic drive field. Compared to the entire field of view $\mathrm{eFOV}$ which describes the whole imaging region of the scanner, the FOV $[-\frac{A}{G}, \frac{A}{G}]$ is
the region in which signals are generated. We will always assume that the drive field FOV is contained in $\mathrm{eFOV}$, i.e. 
$[-\frac{A}{G}, \frac{A}{G}] \subset \mathrm{eFOV}$.
Substitution to the new variable $y$, leads to the system kernel 
\[ s(x,y) = \Mm_G'( x - y)) \; G \frac{1}{G} \gamma'(\gamma_G^{-1}(y)),\] 
where $\Mm_G(x) := - \mu_0 \sigma_0 \Mm(G x)$.
Additionally, we can rewrite the operator equation \eqref{sec2:eq3} w.r.t.\ the new space variable $y$, on
the interval $[-\frac{A}{G}, \frac{A}{G}],$ as
\begin{equation} \label{eq:201710011200}
 S^{\mathrm{y}} c (y) = \gamma_G'(\gamma_G^{-1}(y)) \int_{\Rr} c(x) \Mm_G'( x - y) dx.
\end{equation}
With the further assumption that $\Mm'$ is an even function, we can interchange $x$ and $y$ in $\Mm_G'$ and get
\begin{align*}
 S^{\mathrm{y}} c (y) &= \gamma_G'(\gamma_G^{-1}(y)) \int_{\Rr} c(x) \Mm_G'( y - x) dx  = \gamma_G'(\gamma_G^{-1}(y)) \; ( c \ast \Mm_G')(y),
\end{align*}
or equivalently for $t \in [0,T/2]$:
\begin{align} \label{eq:imagingtime}
	 S^{\mathrm{t}} c (t) = \gamma_G'(t) \; ( c \ast \Mm_G')\!\! \left( \gamma_G(t) \right).
\end{align}
The first derivation of the imaging equation \eqref{eq:imagingtime} can be found in a similar form in the original paper \cite{Rahemeretal2009} and in \cite{marz2016model} in a general multivariate setup.

Typical instances for the trajectory $\gamma(t)$ are given as follows.
\begin{itemize}
\item[(i)] For the cosine trajectory
\begin{align}\label{eq:ChebTra}
\gamma_1(t) = A \cos \frac{2\pi}{T}t, \quad   \gamma_1'(t) = - A \frac{2\pi}{T} \sin \frac{2\pi}{T} t,
\end{align}
the forward operator $S_1^{\mathrm{t}},$ is given by
\begin{align} \label{eq:Chebtra2}
S_1^{\mathrm{t}} c (t) = - \frac{A}{G} \frac{2\pi}{T} \sin \frac{2\pi}{T} t \; \Big( c \ast \Mm_G' \Big)\!\! \left( \frac{A}{G} \cos \frac{2\pi}{T} t \right).
\end{align} 
\item[(ii)] For the sawtooth trajectory
\begin{align}\label{eq:Tra2}
\gamma_2(t) = \left\{ \begin{array}{ll} A \left( 1 - \frac{4t}{T}\right), & \quad \text{if $0 \leq t \leq T/2$} \\
                                        A \left( - 1 + \frac{4t}{T}\right) , & \quad \text{if $T/2 \leq t \leq T$}
                      \end{array} \right.
\end{align}
the derivative on the interval $[0,T/2]$ is given as $\gamma_2'(t) = - \frac{4 A }{T}$ and the forward operator $S_2^{\mathrm{t}}$ can be computed as
\begin{align} \label{eq:Tra3}
S_2^{\mathrm{t}} c (t) = - \frac{A}{G} \frac{4}{T} \; \Big( c \ast \Mm_G' \Big)\!\! \left( \frac{A}{G} \left( 1 - \frac{4t}{T}\right) \right).
\end{align}
\end{itemize}

\subsection{Decomposition of the model operators into basic operators}
\label{sec:DecompositionOps}

To analyze the imaging operators $S^{\mathrm{t}}$,  $S^{\mathrm{y}}$ and  $S^{\mathrm{f}}$, we decompose them into their elementary parts. We introduce the Hilbert space $X^{\mathrm{fov}} = L^2([-\frac{A}{G}, \frac{A}{G}])$ and define the operators
\begin{align}
 Q^{\mathrm{conv}}: X^{\mathrm{space}} \to X^{\mathrm{space}}, \quad & Q^{\mathrm{conv}}f (x) = (f \ast \Mm_G')(x),
\end{align}
which represents the spatial convolution with the derivative of the Langevin function $\Mm_G,$ the projection
\begin{align}
 Q^{\mathrm{fov}}: X^{\mathrm{space}} \to X^{\mathrm{fov}}, \quad & Q^{\mathrm{fov}} f (y) =  f (y),
\end{align}
which restricts $f$ to the field of view, and the trajectory operator
\begin{align}
 Q^{\mathrm{time}}: X^{\mathrm{fov}} \to X^{\mathrm{time}}, \quad & Q^{\mathrm{time}} f (t) =  \gamma_G'(t) \; f (\gamma_G(t)).
\end{align}
This yields the decomposition
\begin{align}
 S^{\mathrm{t}} = Q^{\mathrm{time}} \circ Q^{\mathrm{fov}} \circ Q^{\mathrm{conv}}.
\end{align}
If we use the frequency information of the signal $u$ as measured data, we obtain a related decomposition by additionally
imposing an operator describing the Fourier transform. To this end, we let
\[Q^{\mathrm{fft}}: X^{\mathrm{time}} \to X^{\mathrm{freq}},
\quad (Q^{\mathrm{fft}}f)(n) = \frac{2}{T} \int_{0}^{\frac{T}{2}} f(t) \sin (n \omega_0 t) dt, \quad n \in \Nn.\]
Then, we have the decomposition
\begin{align} \label{eq:123}
S^{\mathrm{f}} = Q^{\mathrm{fft}} \circ S^{\mathrm{t}}
= Q^{\mathrm{fft}} \circ Q^{\mathrm{time}} \circ Q^{\mathrm{fov}} \circ Q^{\mathrm{conv}}.
\end{align}

For the particular trajectories given by \eqref{eq:ChebTra} and \eqref{eq:Tra2} the
frequency operator $S^{\mathrm{f}}$ can be simplified further. As a preparation for the corresponding
statement for \eqref{eq:ChebTra}, we additionally introduce the Hilbert space
\begin{align}
&X^{\mathrm{cheb}} = \ts L^2\left([-\frac{A}{G},\frac{A}{G}], \ts \sqrt{1-\frac{G^2}{A^2}y^2}\right),
\end{align}
the embedding operator
\begin{align}
&Q^{\mathrm{emb}}: X^{\mathrm{fov}} \to X^{\mathrm{cheb}}, \quad Q^{\mathrm{emb}}f = f,
\end{align}
as well as the Chebyshev transform
\begin{align}
Q^{\mathrm{chebT}}&: X^{\mathrm{cheb}} \to X^{\mathrm{freq}}, \quad \notag \\
Q^{\mathrm{chebT}}& f(n) = - \frac{2}{T}  \int_{-\frac{A}{G}}^{\frac{A}{G}}  \ts f(y) U_{n-1} \left(\frac{G}{A} y\right) \sqrt{1-\frac{G^2}{A^2}y^2}  dy.
\end{align}
Here $U_n(x) = \sin((n+1)x)/\sin(x)$ denotes the Chebyshev polynomials of the second kind of degree $n \in \N$.

\begin{figure}   \centering
	\begin{tikzpicture}[description/.style={fill=white,inner sep=2pt}]
	\matrix (m) [matrix of math nodes, row sep=3em,
	column sep=3.5em, text height=1.5ex, text depth=0.25ex,ampersand replacement=\&]
	{ X^{\mathrm{space}} \&  X^{\mathrm{space}}  \& X^{\mathrm{fov}} \&  X^{\mathrm{time}} \& X^{\mathrm{freq}} \\
		{}                   \& {}                   \& {}               \&  X^{\mathrm{cheb}}  \\ };
	\path[->,font=\small]
	(m-1-1) edge node[above] {$ Q^{\textrm{conv}} $} (m-1-2)
	(m-1-2) edge node[above] {$ Q^{\textrm{fov}} $} (m-1-3)
	(m-1-3) edge node[above] {$ Q_1^{\textrm{time}} $} (m-1-4)
	(m-1-4) edge node[above] {$ Q^{\textrm{fft}} $} (m-1-5)
	(m-1-3) edge node[below left] {$ Q^{\textrm{emb}} $} (m-2-4)
	(m-2-4) edge node[below right] {$ Q^{\textrm{cheb}} $} (m-1-5);
	\end{tikzpicture}
	\caption{Diagram of the decomposition of the operator $S_1^f$ for the imaging process based on the cosine trajectory \eqref{eq:ChebTra}  } \label{fig:DiagramDEcompOp}
\end{figure}
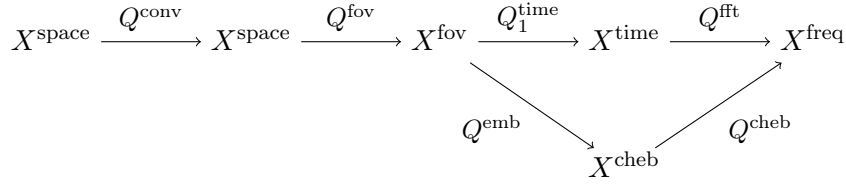

Using this notation, we can give an alternative decomposition of the operator $S_1^{\mathrm{f}}$. For a visualization of the following Lemma \ref{lemma:1},
we refer to Figure~\ref{fig:DiagramDEcompOp}.	
	
\begin{lemma} \label{lemma:1}	
		For the cosine trajectory given by \eqref{eq:ChebTra}, we have the identity
		\begin{align}\label{eq:decompCos}
		S_1^{\mathrm{f}} &= Q^{\mathrm{chebT}} \circ Q^{\mathrm{emb}} \circ Q^{\mathrm{fov}} \circ Q^{\mathrm{conv}}.
		\end{align}
        We have further the identity
        \[ S_1^{\mathrm{f}} c(n)= \frac{2}{T}  \int_{-\frac{A}{G}}^{\frac{A}{G}}  \ts ( c \ast \Mm_G')(y)
        U_{n-1} \left(\frac{G}{A} y\right) \!\! \sqrt{1-\frac{G^2}{A^2}y^2}  dy. \]
\end{lemma}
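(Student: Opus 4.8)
The plan is to reduce the statement to a single change of variables. Specializing the decomposition \eqref{eq:123} to the cosine trajectory \eqref{eq:ChebTra} already yields
\[ S_1^{\mathrm{f}} = Q^{\mathrm{fft}} \circ Q_1^{\mathrm{time}} \circ Q^{\mathrm{fov}} \circ Q^{\mathrm{conv}}, \qquad Q_1^{\mathrm{time}} f(t) = \gamma_G'(t)\, f\bigl(\gamma_G(t)\bigr), \]
where for \eqref{eq:ChebTra} one has $\gamma_G(t)=\tfrac{1}{G}\gamma_1(t)=\tfrac{A}{G}\cos(\omega_0 t)$ and $\gamma_G'(t)=-\tfrac{A\omega_0}{G}\sin(\omega_0 t)$. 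Since the right-hand side of \eqref{eq:decompCos} carries the very same rightmost factors $Q^{\mathrm{fov}}\circ Q^{\mathrm{conv}}$, it suffices to establish the operator identity $Q^{\mathrm{fft}}\circ Q_1^{\mathrm{time}} = Q^{\mathrm{chebT}}\circ Q^{\mathrm{emb}}$ as maps $X^{\mathrm{fov}}\to X^{\mathrm{freq}}$. As $Q^{\mathrm{emb}}$ is the set-theoretic identity, this amounts to checking that the formula for $Q^{\mathrm{fft}}\circ Q_1^{\mathrm{time}}$ on $X^{\mathrm{fov}}$ coincides with the one defining $Q^{\mathrm{chebT}}$.

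For this I would fix $f\in X^{\mathrm{fov}}$ and $n\in\Nn$, unwind the definitions to obtain
\[ \bigl(Q^{\mathrm{fft}} Q_1^{\mathrm{time}} f\bigr)(n) = \frac{2}{T}\int_0^{T/2} \gamma_G'(t)\, f\bigl(\gamma_G(t)\bigr)\,\sin(n\omega_0 t)\,dt, \]
and substitute $y=\gamma_G(t)=\tfrac{A}{G}\cos(\omega_0 t)$, so that $\gamma_G'(t)\,dt=dy$ and, as $t$ sweeps $(0,T/2)$, the variable $y$ sweeps $(-\tfrac{A}{G},\tfrac{A}{G})$ with reversed orientation. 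On this range $\omega_0 t\in(0,\pi)$, hence $\sin(\omega_0 t)=\sqrt{1-\cos^2(\omega_0 t)}=\sqrt{1-\tfrac{G^2}{A^2}y^2}>0$, which is exactly the Chebyshev weight of $X^{\mathrm{cheb}}$. The only ingredient that is not pure bookkeeping is the trigonometric identity
\[ \sin(n\theta)=\sin\theta\;U_{n-1}(\cos\theta),\qquad n\in\Nn, \]
immediate from the relation $U_{n-1}(\cos\theta)=\sin(n\theta)/\sin\theta$ recalled after \eqref{eq:Tra3}; with $\theta=\omega_0 t$ it turns $\sin(n\omega_0 t)$ into $\sqrt{1-\tfrac{G^2}{A^2}y^2}\;U_{n-1}\!\bigl(\tfrac{G}{A} y\bigr)$. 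Performing the substitution --- and matching the sign contributed by the orientation reversal against the one in the definition of $Q^{\mathrm{chebT}}$ --- then reproduces precisely the integral defining $Q^{\mathrm{chebT}} f(n)$, which gives \eqref{eq:decompCos}. The second displayed identity follows by appending $Q^{\mathrm{fov}}\circ Q^{\mathrm{conv}}$ on the right, since $Q^{\mathrm{conv}}c=c\ast\Mm_G'$ and $Q^{\mathrm{fov}}$ is restriction to $[-\tfrac{A}{G},\tfrac{A}{G}]$, so the generic $f$ in the formula for $Q^{\mathrm{chebT}}$ is replaced by $(c\ast\Mm_G')\big|_{[-A/G,\,A/G]}$.

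The computation itself is routine, so the main obstacle is really only to justify the change of variables at the turning points $t=0$ and $t=T/2$, where $\gamma_G'$ vanishes and $\gamma_G$ fails to be a local diffeomorphism. On the \emph{open} interval $(0,T/2)$, however, $\gamma_G$ is a strictly decreasing $C^1$ bijection onto $(-\tfrac{A}{G},\tfrac{A}{G})$, so the substitution is valid there; since the two excluded endpoints form a null set and the integrands are absolutely integrable near them (using $f\in L^2$, boundedness of $\gamma_G'$, and Cauchy--Schwarz), the identity extends to the closed interval. As a side remark, the weight $\sqrt{1-\tfrac{G^2}{A^2}y^2}$ defining $X^{\mathrm{cheb}}$ is not an arbitrary choice but equals, up to the positive constant $\tfrac{A\omega_0}{G}$, the Jacobian $|\gamma_G'(\gamma_G^{-1}(y))|$ of the substitution --- which is exactly why $X^{\mathrm{cheb}}$, and not $X^{\mathrm{fov}}$, is the natural domain of the Chebyshev transform. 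A separate, elementary verification (which we omit here) shows that each of $Q^{\mathrm{conv}},Q^{\mathrm{fov}},Q^{\mathrm{emb}},Q^{\mathrm{chebT}},Q_1^{\mathrm{time}},Q^{\mathrm{fft}}$ is a bounded operator between the indicated Hilbert spaces.
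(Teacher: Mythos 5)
Your proposal is correct and takes essentially the same route as the paper's own proof: both amount to applying the sine transform to $S_1^{\mathrm{t}}c(t)=\gamma_G'(t)\,(c\ast\Mm_G')(\gamma_G(t))$ and substituting $y=\tfrac{A}{G}\cos(\omega_0 t)$ together with $\sin(n\theta)=\sin\theta\,U_{n-1}(\cos\theta)$, your only (cosmetic) reorganizations being the intermediate operator identity $Q^{\mathrm{fft}}\circ Q_1^{\mathrm{time}}=Q^{\mathrm{chebT}}\circ Q^{\mathrm{emb}}$ and the extra care at the turning points $t=0,\,T/2$. Your sign bookkeeping (the orientation reversal absorbed by the minus in the definition of $Q^{\mathrm{chebT}}$) is the consistent reading for \eqref{eq:decompCos}; carried through, it produces the explicit integral with an additional minus sign, which points to a sign slip in the paper between the definition of $Q^{\mathrm{chebT}}$ and the lemma's second display rather than to any gap in your argument.
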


\begin{proof}
    Using the explicit representation \eqref{eq:Chebtra2} of the operator $S_1^{\mathrm{t}}$, we can calculate the operator $S_1^{\mathrm{f}} =
    Q^{\mathrm{fft}} \circ S_1^{\mathrm{t}}$ as
    \begin{align*} S_1^{\mathrm{f}} c (n) &=
    \frac{2}{T}  \int_{0}^{T/2}  \left(- \frac{A}{G} \omega_0  \sin \omega_0 t \right) \;
    \Big( c \ast \Mm_G' \Big)\!\! \left( \frac{A}{G} \cos \omega_0 t \right) \sin (n \omega_0 t) dt\,.
	\end{align*}
    With the coordinate change $y(t) = \gamma_1(t)/G$, we obtain
	\begin{align*} S_1^{\mathrm{f}} c (n) &= \frac{2}{T}  \int_{-\frac{A}{G}}^{\frac{A}{G}}  ( c \ast \Mm_G')(y) \sin \left(n \arccos \frac{G }{A} y\right) dy,\\
	& = \frac{2}{T}  \int_{-\frac{A}{G}}^{\frac{A}{G}}  \ts ( c \ast \Mm_G')(y) U_{n-1} \left(\frac{G}{A} y\right) \sqrt{1-\frac{G^2}{A^2}y^2}  dy,
	\end{align*}
    where $U_{n-1}$ denotes the Chebyshev polynomial of the second kind of degree $n-1$. By the definition of the Chebyshev transform given above, the
    last integral corresponds precisely with $Q^{\mathrm{chebT}} \circ Q^{\mathrm{emb}} \circ Q^{\mathrm{fov}} \circ Q^{\mathrm{conv}} f$.
\end{proof}

\begin{remark} The Hilbert spaces $X^{\mathrm{time}}$, $X^{\mathrm{cheb}}$ and $X^{\mathrm{freq}}$ are isometric isomorphic.
Therefore, in the case the cosine trajectory $\gamma_1$ is used in the imaging operator, the reconstruction from all three Hilbert spaces
gives equivalent results, see \cite{Gruettner_etal2013,KnoppIWMPI2016a} %\marginpar{@Luebeck: Bitte Referenz angeben  - Ist \cite{Gruettner_etal2013} geeignet?}
\end{remark}

%Defining
%\[ s_n^{\mathrm{f}}(x) = \frac{2}{T} \left( \Mm_G' \ast \left(\1_{[-\frac{A}{G}, \frac{A}{G}]} U_{n-1} \left(\frac{G}{A} \cdot \right) \!\! \sqrt{1-\frac{G^2}{A^2}(\cdot)^2} \right) \right)\!(x).\]
%the system matrix $S$ is than given by
%\[ S_1^{\mathrm{f}} c (n) = \langle s_n^f, c\rangle\]
%and the dual operator ${S^{\mathrm{f}}}^*$  by
%\[ {S_1^{\mathrm{f}}}^* \hat{u} (x) = \sum_{i=1}^\infty \hat{u}(n) s_n^f(x). \]
	
We summarize our results on the decomposition of the MPI forward operators in the following theorem.
\begin{theorem}
	For a general trajectory $\gamma$ we have the decomposition	
	\begin{align}\label{eq:DecompGeneral}
	S^{\mathrm{f}}
	= Q^{\mathrm{fft}} \circ Q^{\mathrm{time}} \circ Q^{\mathrm{fov}} \circ Q^{\mathrm{conv}}.
	\end{align}
	For the cosine trajectory given by \eqref{eq:ChebTra}, we further have
	\begin{align}
	S_1^{\mathrm{f}} c(n) &= Q^{\mathrm{chebT}} \circ Q^{\mathrm{emb}} \circ Q^{\mathrm{fov}} \circ Q^{\mathrm{conv}} c (n) \\
                     &= \frac{2}{T}  \int_{-\frac{A}{G}}^{\frac{A}{G}}  \ts ( c \ast \Mm_G')(y) U_{n-1} \left(\frac{G}{A} y\right) \!\! \sqrt{1-\frac{G^2}{A^2}y^2}  dy\,. \notag
	\end{align}	
	For the sawtooth trajectory given by \eqref{eq:Tra2}, we have the explicit representation 	
	\begin{align}
	S_2^{\mathrm{f}} c (n) &= \frac{2}{T}  \int_{-\frac{A}{G}}^{\frac{A}{G}}
( c \ast \Mm_G')(y) \sin \left(\frac{n T}{4}\left(\frac{A}G - y \right)\right) dy\,.
	\end{align}
\end{theorem}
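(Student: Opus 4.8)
The plan is to obtain all three identities from material already in place, supplemented by one short change-of-variables computation for the sawtooth case.

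First I would establish the general decomposition \eqref{eq:DecompGeneral}. Composing the building blocks along $X^{\mathrm{space}} \xrightarrow{Q^{\mathrm{conv}}} X^{\mathrm{space}} \xrightarrow{Q^{\mathrm{fov}}} X^{\mathrm{fov}} \xrightarrow{Q^{\mathrm{time}}} X^{\mathrm{time}}$ and unwinding the definitions gives $(Q^{\mathrm{time}} \circ Q^{\mathrm{fov}} \circ Q^{\mathrm{conv}} c)(t) = \gamma_G'(t)\,(c \ast \Mm_G')(\gamma_G(t))$, where the restriction enacted by $Q^{\mathrm{fov}}$ loses nothing because $\gamma_G(t) \in [-\tfrac{A}{G}, \tfrac{A}{G}]$ for every $t \in [0, T/2]$. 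By \eqref{eq:imagingtime} the right-hand side is $S^{\mathrm{t}}c(t)$, so $S^{\mathrm{t}} = Q^{\mathrm{time}} \circ Q^{\mathrm{fov}} \circ Q^{\mathrm{conv}}$; composing with $Q^{\mathrm{fft}}$ on the left, which is the definition of $S^{\mathrm{f}}$ as recorded in \eqref{eq:123}, yields \eqref{eq:DecompGeneral}.

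The cosine case is then nothing but Lemma~\ref{lemma:1}, which already proves both the factorization $S_1^{\mathrm{f}} = Q^{\mathrm{chebT}} \circ Q^{\mathrm{emb}} \circ Q^{\mathrm{fov}} \circ Q^{\mathrm{conv}}$ and the explicit Chebyshev integral. For the sawtooth trajectory I would repeat the computation from the proof of Lemma~\ref{lemma:1} almost verbatim, now starting from the explicit form \eqref{eq:Tra3} of $S_2^{\mathrm{t}}$: apply $Q^{\mathrm{fft}}$, i.e. multiply by $\sin(n\omega_0 t)$ and integrate over $[0, T/2]$, and substitute $y = \gamma_2(t)/G = \tfrac{A}{G}(1 - \tfrac{4t}{T})$ using \eqref{eq:Tra2}. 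The constant $-\tfrac{A}{G}\tfrac{4}{T}$ appearing in $S_2^{\mathrm{t}}$ and the Jacobian of the substitution combine with $\tfrac{2}{T}$, while $\sin(n\omega_0 t)$ becomes the affine-in-$y$ phase obtained from $t = \gamma_2^{-1}(Gy)$; collecting the terms produces the claimed representation of $S_2^{\mathrm{f}}$.

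The computations themselves are elementary; the only points that genuinely need care are well-definedness and sign bookkeeping. For $Q^{\mathrm{conv}}$ to map $L^2(\Rr)$ into $L^2(\Rr)$ one invokes Young's convolution inequality together with $\Mm_G' \in L^1(\Rr)$, which holds because the Langevin function is bounded and monotone and hence has integrable derivative; for $Q^{\mathrm{time}}$ to land in $L^2([0,T/2])$ one uses that $\gamma_G'$ is bounded on $[0,T/2]$ for both trajectories, after which the change of variables $y = \gamma_G(t)$ gives $\|Q^{\mathrm{time}} f\|^2 \le \|\gamma_G'\|_\infty \|f\|^2$. In the sawtooth substitution the integration limits reverse orientation, since $\gamma_2$ is decreasing on $[0,T/2]$, and this sign has to be matched against the negative constant $\gamma_2' = -\tfrac{4A}{T}$ so that the prefactors cancel as intended — this is the one spot where a careless calculation could slip, and I regard it as the main (though modest) obstacle.
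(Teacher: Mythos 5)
Your proposal follows essentially the same route as the paper: the general decomposition is read off from \eqref{eq:123} (equivalently, from unwinding the building blocks against \eqref{eq:imagingtime}), the cosine case is exactly Lemma~\ref{lemma:1}, and the sawtooth case is obtained by repeating the proof of Lemma~\ref{lemma:1} with the substitution $y=\gamma_2(t)/G$ applied to \eqref{eq:Tra3}. The additional remarks on well-definedness of $Q^{\mathrm{conv}}$ and $Q^{\mathrm{time}}$ and on the orientation reversal in the sawtooth substitution are correct and only add care to the same argument.
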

\begin{proof}
The first assertion was derived in \eqref{eq:123}. The second assertion was proven in Lemma~\ref{lemma:1}. The third statement follows as in
the proof of Lemma \ref{lemma:1} by applying the Fourier transform and the coordinate change $y(t) = \gamma_2(t)/G$ to the explicit
representation of the operator $S_2^{\mathrm{t}}$ given in \eqref{eq:Tra3}.
\end{proof}

\begin{remark}[\em Preproceesing and postprocessing operators]
The operators introduced above are the elementary building blocks for the MPI operator describing the model equation \eqref{sec2:eq2}. 
We point out that there are additional  	
pre- and post processing operators involved in the reconstruction process that go beyond the model equation \eqref{sec2:eq2}.
%; see for instance 
Depending on the employed MPI scanner there are several possible preprocessing steps involved in the signal generation. 
In some cases, the measured voltage signal $u$ is
not exactly an odd function. This measured distortion of $u$ can be corrected using a symmetrization operator. Further artifacts of the physical scanner
can be adjusted by calculating a transfer function \cite{Bente2015,Croft2012}.
In connection with this we also notice that the frequency information corresponding to the excitation frequencies of the system  information may be severely corrupted or not measured at all. 
This results in an
additional projection operator in the frequency domain.  
Finally, also several postprocessing steps can be modeled and formulated as additional operators. An important example here are
merging operations, in which the reconstruction from several different fields of view are patched together to a single reconstructed image \cite{Ahlborg2015b,KnoppThem2015}. 
Other postprocessing operations involve mild deconvolutions \cite{Croft2012,GoodwillConolly2011,saritas2013magnetic} or DC-offset corrections \cite{Lu2013}.
\end{remark}

\section{Mathematical Analysis of the 1D MPI forward model}  \label{sec:analysis}

We start out to study the operators in the decomposition \eqref{eq:DecompGeneral} in Section~\ref{sec:analysisBasicOP};
in Section~\ref{sec:AnalysisComposed}, we analyse the composition of the building blocks and draw our conclusions for the MPI forward model. For an account on inverse problems and related operator equations
we refer to \cite{Engl_Regularization_of_Inverse_Problems,louis1989inverse,bertero1998introduction,kirsch2011introduction}.

\subsection{Analysis of the basic operators.} \label{sec:analysisBasicOP}

Having a look at \eqref{eq:DecompGeneral}, we anticipate that the main work consists of analyzing the
convolution operator $Q^{\mathrm{conv}}$ and the trajectory operator $Q^{\mathrm{time}}.$
This is done after discussing the simpler operators first.

\paragraph*{The operator $Q^{\mathrm{fov}}.$}
The operator $Q^{\mathrm{fov}}$ is a surjective restriction operator with a clear spectral structure. For later use, we record the following statement.
\begin{lemma}
	The restriction operator $Q^{\mathrm{fov}}: X^{\mathrm{space}} \to X^{\mathrm{fov}}$
	given by $Q^{\mathrm{fov}} f = f|_{[-\frac{A}G, \frac{A}{G}]}$ is an orthogonal projection onto the subspace $X^{\mathrm{fov}}$
	with operator norm equal to $1$.
\end{lemma}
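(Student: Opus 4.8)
The plan is to identify $X^{\mathrm{fov}}$ with a closed subspace of $X^{\mathrm{space}}$ and then recognize $Q^{\mathrm{fov}}$ as multiplication by an indicator function. First I would introduce the extension-by-zero map $\iota\colon X^{\mathrm{fov}} \to X^{\mathrm{space}}$, $(\iota g)(x) = g(x)$ for $x \in [-\tfrac{A}{G},\tfrac{A}{G}]$ and $(\iota g)(x) = 0$ otherwise. This is a linear isometry, and its image is the subspace $V := \{ f \in L^2(\mathbb{R}) : f = 0 \text{ a.e. on } \mathbb{R}\setminus[-\tfrac{A}{G},\tfrac{A}{G}] \}$, which is closed since $L^2$-convergence forces a.e.\ convergence along a subsequence. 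One has $Q^{\mathrm{fov}} \circ \iota = \mathrm{id}_{X^{\mathrm{fov}}}$, so under this identification $Q^{\mathrm{fov}}$ corresponds to $P := \iota \circ Q^{\mathrm{fov}}\colon X^{\mathrm{space}} \to X^{\mathrm{space}}$, which is exactly multiplication by $\chi := \mathbbm{1}_{[-A/G,\,A/G]}$, i.e.\ $Pf = \chi f$.

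Next I would verify the defining properties of an orthogonal projection for $P$. Linearity is immediate. Boundedness with $\|P\| \le 1$ follows from $0 \le \chi \le 1$, since $\|Pf\|_{L^2(\mathbb{R})}^2 = \int_{\mathbb{R}} \chi(x)^2 |f(x)|^2\,dx \le \|f\|_{L^2(\mathbb{R})}^2$. Idempotency holds because $\chi^2 = \chi$ pointwise, hence $P^2 f = \chi^2 f = \chi f = Pf$. Self-adjointness follows since $\chi$ is real-valued and bounded: $\langle Pf, g\rangle = \int_{\mathbb{R}} \chi f \overline{g} = \int_{\mathbb{R}} f \overline{\chi g} = \langle f, Pg\rangle$. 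A bounded, idempotent, self-adjoint operator is an orthogonal projection, and its image is $V$ (clearly $Pf \in V$, and $Pf = f$ whenever $f \in V$). Transporting this back through the isometry $\iota$ yields that $Q^{\mathrm{fov}}$ is the orthogonal projection of $X^{\mathrm{space}}$ onto $X^{\mathrm{fov}}$; in particular it is surjective, since every $g \in X^{\mathrm{fov}}$ equals $Q^{\mathrm{fov}}(\iota g)$.

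Finally, for the operator norm I would combine $\|Q^{\mathrm{fov}}\| \le 1$ from the estimate above with the reverse bound: for any $f = \iota g$ with $g \ne 0$ one has $\|Q^{\mathrm{fov}} f\|_{X^{\mathrm{fov}}} = \|g\|_{X^{\mathrm{fov}}} = \|f\|_{X^{\mathrm{space}}}$, so $\|Q^{\mathrm{fov}}\| \ge 1$, whence equality. I do not expect a genuine obstacle here; the only point requiring a bit of care is that $X^{\mathrm{fov}}$ is not literally a subset of $X^{\mathrm{space}}$, so the statement must be read through the zero-extension isometry, after which everything reduces to the elementary fact that multiplication by the characteristic function of a measurable set is an orthogonal projection on $L^2(\mathbb{R})$.
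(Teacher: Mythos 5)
Your proof is correct and follows essentially the same route as the paper, which simply notes that $Q^{\mathrm{fov}}$ (viewed as multiplication by the indicator of the field of view) is idempotent and self-adjoint, hence an orthogonal projection of norm $1$. Your extra care in identifying $X^{\mathrm{fov}}$ with the closed subspace of $L^2(\mathbb{R})$ via zero-extension is a welcome precision the paper leaves implicit, but it does not change the substance of the argument.
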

\begin{proof}
	This follows directly from the properties $(Q^{\mathrm{fov}})^2 = Q^{\mathrm{fov}}$ and $Q^{\mathrm{fov}*} = Q^{\mathrm{fov}}$, i.e.
	$Q^{\mathrm{fov}}$ is an idempotent, self-adjoint operator on the Hilbert space $X^{\mathrm{space}}$, and, thus, an orthogonal projection
	onto the range $X^{\mathrm{fov}}$.
\end{proof}

\paragraph*{The operator $Q^{\mathrm{fft}}.$} The Fourier operator $Q^{\mathrm{fft}}$ simply performs the sine transform.
For later use we record the following well-known fact.
\begin{lemma}
	The operator $Q^{\mathrm{fft}}: X^{\mathrm{time}} \to X^{\mathrm{freq}}$ is an isometry.
\end{lemma}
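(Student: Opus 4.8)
The plan is to show that $Q^{\mathrm{fft}}$ preserves the norm, i.e.\ that $\|Q^{\mathrm{fft}} f\|_{X^{\mathrm{freq}}} = \|f\|_{X^{\mathrm{time}}}$ for every $f \in X^{\mathrm{time}} = L^2([0,T/2])$, which for a bounded linear operator between Hilbert spaces is equivalent to being an isometry (and also forces injectivity). The key point is that the functions $e_n(t) = \sqrt{4/T}\,\sin(n\omega_0 t)$, $n \in \Nn$, with $\omega_0 = 2\pi/T$, form an orthonormal basis of $L^2([0,T/2])$: this is the classical Fourier sine basis on a half-period interval, and orthonormality is the standard computation $\frac{4}{T}\int_0^{T/2}\sin(n\omega_0 t)\sin(m\omega_0 t)\,dt = \delta_{nm}$, while completeness follows because these are exactly the eigenfunctions of $-d^2/dt^2$ with Dirichlet--Dirichlet (or the appropriate mixed) boundary conditions on $[0,T/2]$, equivalently the restriction to $[0,T/2]$ of the odd, $T$-periodic extension expanded in a full Fourier series.

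Granting this, I would observe that the defining formula $(Q^{\mathrm{fft}}f)(n) = \frac{2}{T}\int_0^{T/2} f(t)\sin(n\omega_0 t)\,dt = \sqrt{\tfrac{1}{T}}\,\langle f, e_n\rangle_{X^{\mathrm{time}}}$ up to the normalization constant; more precisely $(Q^{\mathrm{fft}}f)(n) = \sqrt{\tfrac{T}{4}}\cdot\tfrac{2}{T}\langle f,e_n\rangle \cdot \sqrt{\tfrac{4}{T}}\,$— I would simply track the constant carefully and write $(Q^{\mathrm{fft}}f)(n) = \frac{2}{T}\int_0^{T/2} f(t)\sin(n\omega_0 t)\,dt$, relate it to $\langle f, e_n\rangle$, and then apply Parseval's identity for the orthonormal basis $(e_n)$ to get $\sum_{n\in\Nn} |\langle f,e_n\rangle|^2 = \|f\|_{X^{\mathrm{time}}}^2$. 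Substituting the normalization constant back in then yields $\|Q^{\mathrm{fft}}f\|_{X^{\mathrm{freq}}}^2 = \sum_n |(Q^{\mathrm{fft}}f)(n)|^2 = \|f\|_{X^{\mathrm{time}}}^2$, which is the claim.

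The only genuine subtlety — and the step I would flag as the main obstacle, though it is a mild one — is the normalization bookkeeping: the coefficient $2/T$ in the definition of $Q^{\mathrm{fft}}$ is the \emph{analysis} coefficient for a Fourier sine series on an interval of length $T/2$, and one must check that with the inner product and counting measure on $X^{\mathrm{freq}} = \ell^2(\Nn)$ (no extra weight), the constants combine to give exactly $1$ rather than some power of $T$. Concretely, since $\|e_n\|_{X^{\mathrm{time}}}^2 = 1$ means $\frac{4}{T}\int_0^{T/2}\sin^2(n\omega_0 t)\,dt = 1$, one has $\langle f, e_n\rangle = \sqrt{\tfrac{4}{T}}\int_0^{T/2} f(t)\sin(n\omega_0 t)\,dt = \sqrt{\tfrac{4}{T}}\cdot\tfrac{T}{2}\,(Q^{\mathrm{fft}}f)(n) = \sqrt{\tfrac{T}{4}}\,\cdot 2 \cdot (Q^{\mathrm{fft}}f)(n)$... so in fact one finds $(Q^{\mathrm{fft}}f)(n) = \sqrt{\tfrac{1}{T}}\langle f,e_n\rangle$ and hence $\|Q^{\mathrm{fft}}f\|^2 = \tfrac1T \sum_n|\langle f,e_n\rangle|^2 = \tfrac1T\|f\|^2$; if that is what the computation gives, then strictly $Q^{\mathrm{fft}}$ is a scalar multiple of an isometry, and the lemma as stated should be read with the inner product on $X^{\mathrm{freq}}$ implicitly scaled by $T$ (or $X^{\mathrm{time}}$ scaled by $1/T$), which is consistent with the preceding remark that $X^{\mathrm{time}}$, $X^{\mathrm{cheb}}$, $X^{\mathrm{freq}}$ are isometrically isomorphic. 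I would therefore present the proof by fixing the convention that makes $(Q^{\mathrm{fft}}f)(n)$ equal to the coefficient of $f$ with respect to an orthonormal sine basis, invoke completeness of that basis on $[0,T/2]$, and conclude by Parseval; everything else is routine.
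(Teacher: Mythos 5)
Your approach is exactly the paper's: the entire proof given there is the one-line remark that the statement ``is just the Parseval theorem,'' i.e.\ Parseval for the Fourier sine basis on $[0,T/2]$, which is what you invoke. Your constant-tracking is correct and goes beyond the paper: with the stated normalization $(Q^{\mathrm{fft}}f)(n)=\frac{2}{T}\int_0^{T/2}f(t)\sin(n\omega_0 t)\,dt$ and the unweighted norms on $L^2([0,T/2])$ and $\ell^2(\Nn)$, one gets $\|Q^{\mathrm{fft}}f\|_{\ell^2}^2=\frac{1}{T}\|f\|_{L^2}^2$, so $Q^{\mathrm{fft}}$ is a scalar multiple of an isometry and the lemma as literally stated requires an implicit rescaling of one of the inner products --- a point the paper silently glosses over and which your proof correctly flags and repairs.
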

Note that this statement is just the Parseval theorem.

\paragraph*{The convolution operator $Q^{\mathrm{conv}}$.}
The central operator in the MPI imaging process, 
\[
Q^{\mathrm{conv}}: X^{\mathrm{space}} \to X^{\mathrm{space}}, \quad  Q^{\mathrm{conv}}f (x) = (f \ast \Mm_G')(x),\]
is a convolution operator. The magnetization function $\Mm_G$ is, for idealized distributions of magnetic particles, modelled as
\[\Mm_G(x) = a L( \beta G x )\]
with the Langevin function
\[ L(x) =  \left( \coth( x) - \frac{1}{ x} \right) \]
and the constants
\[ a = \mu_0 \sigma_0 m, \quad\text{and} \quad \beta = \mu_0 m /(k_B T).\]
Here $m = \pi D^3 M_s / 6$ is the magnetic moment depending on the radius $D$ of the magnetic particle and the 
saturation magnetization $M_s$. $k_B$ denotes the Boltzmann constant, $T$ the temperature, $\mu_0$ the magnetic 
permeability and $\sigma_0$ the coil sensitivity.

The convolution kernel is then given by
\begin{equation}
\Mm_G'(x) = a \beta G L'(\beta G x)\,,
\end{equation}
with
\[L'(x) = \left\{ \begin{array}{ll} \frac1{x^2} - \frac{1}{\sinh(x)^2}, & x \neq 0\,, \\ \frac13, & x = 0\,. \end{array} \right.\]

In the following we use the concept of Sobolev spaces; an account on these spaces can be found im 
\cite{adams2003sobolev}.

\begin{lemma} \label{lemma-convolution}
The convolution kernel $\Mm_G'$ is analytic and the image $R(Q^{\mathrm{conv}})$ is contained in every Sobolev space
$H^s(\mathbb{R})$, $s \geq 0$. In particular, the inverse problem $Q^{\mathrm{conv}} f = g$ is ill-posed.
\end{lemma}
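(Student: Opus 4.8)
The plan is to route everything through the Fourier transform on $\mathbb{R}$, so that the statement reduces to decay properties of $\widehat{\Mm_G'}$; throughout, $\widehat{\,\cdot\,}$ denotes the Fourier transform.

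\textbf{Analyticity.} First I would check that $L'$, and hence $\Mm_G'$, extends to a holomorphic function on a horizontal strip around $\mathbb{R}$. The only possible singularities of $z\mapsto\frac{1}{z^2}-\frac{1}{\sinh(z)^2}$ are $z=0$ and the zeros $z=i\pi k$, $k\in\mathbb{Z}\setminus\{0\}$, of $\sinh$. At $z=0$ the two double poles cancel: from $\sinh(z)^2=z^2\bigl(1+\tfrac{z^2}{3}+O(z^4)\bigr)$ one obtains $L'(z)=\tfrac13+O(z^2)$, so the singularity is removable with value $\tfrac13$, which agrees with the stated definition. Consequently $L'$ is holomorphic on $\{z:|\mathrm{Im}\,z|<\pi\}$, and $\Mm_G'(z)=a\beta G\,L'(\beta G z)$ is holomorphic on $\{z:|\mathrm{Im}\,z|<\pi/(\beta G)\}$; in particular $\Mm_G'$ is real-analytic on $\mathbb{R}$.

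\textbf{Sobolev regularity.} The crucial point is that $\widehat{\Mm_G'}$ decays faster than any polynomial. Because $\Mm_G'$ is holomorphic on the strip above and, on every line $\mathrm{Im}\,z=c$ with $|c|<\pi/(\beta G)$, is $O\bigl((1+x^2)^{-1}\bigr)$ as $|\mathrm{Re}\,z|\to\infty$ (the $\coth$-type term decays exponentially, the $z^{-2}$ term polynomially), uniformly for $c$ in compact subintervals, a contour-shift (Paley--Wiener) argument yields $|\widehat{\Mm_G'}(\xi)|\le C_\delta\,e^{-\delta|\xi|}$ for every $\delta<\pi/(\beta G)$. Alternatively, the Mittag--Leffler expansion $\frac{1}{\sinh(z)^2}=\sum_{k\in\mathbb{Z}}\frac{1}{(z-i\pi k)^2}$ combined with a residue computation gives the closed form $\widehat{L'}(\xi)=\frac{2\pi|\xi|}{e^{\pi|\xi|}-1}$, which makes the exponential decay and the strict positivity for $\xi\neq0$ explicit. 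Now for $f\in X^{\mathrm{space}}=L^2(\mathbb{R})$ one has $\widehat{Q^{\mathrm{conv}}f}=\widehat f\cdot\widehat{\Mm_G'}$, hence for every $s\ge0$
\[
\|Q^{\mathrm{conv}}f\|_{H^s(\mathbb{R})}^2=\int_{\mathbb{R}}(1+\xi^2)^s\,|\widehat{\Mm_G'}(\xi)|^2\,|\widehat f(\xi)|^2\,d\xi\le\Bigl(\sup_{\xi\in\mathbb{R}}(1+\xi^2)^{s}|\widehat{\Mm_G'}(\xi)|^2\Bigr)\|f\|_{L^2(\mathbb{R})}^2<\infty,
\]
the supremum being finite since $(1+\xi^2)^s e^{-2\delta|\xi|}\to0$. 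Thus $R(Q^{\mathrm{conv}})\subseteq H^s(\mathbb{R})$ for every $s\ge0$.

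\textbf{Ill-posedness.} It follows that $R(Q^{\mathrm{conv}})\subseteq\bigcap_{s\ge0}H^s(\mathbb{R})\subsetneq L^2(\mathbb{R})$, so $Q^{\mathrm{conv}}$ is not surjective, and it admits no continuous inverse on its range. Indeed, since $\Mm_G'\in L^1(\mathbb{R})$, the Riemann--Lebesgue lemma gives $\widehat{\Mm_G'}(\xi)\to0$ as $|\xi|\to\infty$, so normalised elements $f_n\in L^2(\mathbb{R})$ with $\widehat{f_n}$ supported in $[n,n+1]$ satisfy $\|f_n\|_{L^2}=1$ while $\|Q^{\mathrm{conv}}f_n\|_{L^2}\le\mathrm{const}\cdot\sup_{|\xi|\ge n}|\widehat{\Mm_G'}(\xi)|\to0$; the inverse on the range is well defined because $\widehat{\Mm_G'}\neq0$ almost everywhere forces $Q^{\mathrm{conv}}$ to be injective. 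Hence the equation $Q^{\mathrm{conv}}f=g$ is ill-posed. I expect the second step to be the main obstacle: one has to pin down the exact strip of holomorphy (equivalently, locate the poles of $1/\sinh(z)^2$ at $i\pi\mathbb{Z}$) and convert it into quantitative Fourier decay, either via a careful Paley--Wiener contour estimate or via the residue computation of $\widehat{L'}$. The removable-singularity check at the origin and the final ill-posedness conclusion are then routine.
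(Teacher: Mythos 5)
Your proposal is correct and follows essentially the same route as the paper: analyticity of $L'$ (hence $\Mm_G'$) from holomorphy on the strip $|\mathrm{Im}\,z|<\pi$, exponential decay of the Fourier transform of the kernel — your closed form $\widehat{L'}(\xi)=2\pi|\xi|/(e^{\pi|\xi|}-1)$ agrees, up to Fourier normalization, with the paper's $\sqrt{\pi/2}\,\bigl(\omega\coth(\tfrac{\pi}{2}\omega)-|\omega|\bigr)$ — and then the multiplier estimate $\|\Mm_G'\ast f\|_s\leq C\|f\|_2$ giving $R(Q^{\mathrm{conv}})\subset H^s(\mathbb{R})$ for all $s\geq 0$. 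Your extra details (removable singularity at $0$, the residue/Paley--Wiener derivation of the transform, and the explicit unbounded-inverse argument for ill-posedness) merely flesh out steps the paper states or leaves implicit.
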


\begin{proof}
The Langevin function $L$ extended into the complex plane is a holomorphic function on the open stripe $\{x+iy \ : - \pi < y < \pi\}$.
Thus, $L$ and therefore also $L'$ and $\Mm_G'$ are analytic function on the real line.
The Fourier transform of the derivative $L'$ is given by the formula
\begin{equation} \label{eq:Fouriertransform}
\mathcal{F}(L')(\omega) = \sqrt{\frac{\pi}{2}}\left(\omega \coth \frac{\pi}{2} \omega - |\omega|\right).
\end{equation}
This function, and thus also the Fourier transform of $\Mm_G'$ decay exponentially as $\omega \to \infty$.
Therefore, we obtain, for all possible Sobolev norms $\|\cdot\|_s,$ a constants $C,C'$ such that
for all $f \in L^2(\Rr)$ the inequality
\[ \| \Mm_G' \ast f \|_s^2 = \int_{\Rr} (1+\omega^2)^{s} |\mathcal{F}(\Mm_G')(\omega) \hat{f} (\omega)|^2 d \omega \leq C \int_{\Rr} |\hat{f}(\omega)|^2
d\omega =
C' \ \|f\|_2^2 \]
holds. This shows the assertion of the lemma.
\end{proof}

\paragraph*{The trajectory operator $Q^{\mathrm{time}}$.}

The influence of the applied MPI trajectory $\gamma_G$ on the MPI model is described by the operator $Q^{\mathrm{time}}$.
A second equivalent description of this operator is obtained by applying the sine transform $Q^{\mathrm{fft}}$.
Due to the additional multiplication with the derivative $\gamma_{G}'(t)$, the trajectory operator $Q^{\mathrm{time}}$ itself is in some cases an
ill posed operator. This is summarized in the next Lemma. For a more detailed study of 
multiplication operators in the framework of inverse problems, we refer to \cite{FreitagHofmann}.

\begin{lemma} \label{lem:qtime}
Let $\gamma_G(t)$ be a continuously differentiable bijection from the time interval $[0,\frac{T}{2}]$ onto the field of view
$[-\frac{A}{G}, \frac{A}{G}]$. Also, the derivative $\gamma_G'$ vanishes only on a set of measure zero.
Then, the trajectory operator $Q^{\mathrm{time}}: X^{\mathrm{fov}} \to X^{\mathrm{time}}$ given
by $Q^{\mathrm{time}} f (t) = \gamma_G'(t) f(\gamma_G(t))$ is bounded and injective.
The inverse problem $Q^{\mathrm{time}} f = g$ is ill-posed if and only if $\gamma_G'(t)$ has at least one root in the interval $[0,\frac{T}{2}]$.
\end{lemma}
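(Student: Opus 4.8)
The plan is to reduce all three assertions to the substitution $y=\gamma_G(t)$. Since a continuous bijection between intervals is strictly monotone and $\gamma_G\in C^1$, the change-of-variables identity $\int_0^{T/2}h(\gamma_G(t))\,|\gamma_G'(t)|\,dt=\int_{-A/G}^{A/G}h(y)\,dy$ holds for every nonnegative measurable $h$. In particular, with $h=|f|^2$ it gives $\int_0^{T/2}|f(\gamma_G(t))|^2\,|\gamma_G'(t)|\,dt=\|f\|_{X^{\mathrm{fov}}}^2$, and with $h=\mathbf 1_N$ for a Lebesgue null set $N$ it shows that $\gamma_G'$ vanishes a.e.\ on $\gamma_G^{-1}(N)$, so that $Q^{\mathrm{time}}$ is well defined on $L^2$-classes. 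Boundedness, with $\|Q^{\mathrm{time}}\|\le\|\gamma_G'\|_\infty^{1/2}$, then follows by estimating one factor $|\gamma_G'(t)|^2\le\|\gamma_G'\|_\infty\,|\gamma_G'(t)|$ inside the integral defining $\|Q^{\mathrm{time}}f\|^2$ and applying the identity; here $\|\gamma_G'\|_\infty<\infty$ because $\gamma_G'$ is continuous on the compact interval $[0,T/2]$.

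For injectivity I would argue as follows. If $Q^{\mathrm{time}}f=0$, then $\gamma_G'(t)\,f(\gamma_G(t))=0$ for a.e.\ $t$, and since $\gamma_G'$ vanishes only on a null set this forces $f(\gamma_G(t))=0$ for a.e.\ $t$; equivalently $\gamma_G^{-1}(E)$ is a null set, where $E=\{y:f(y)\neq 0\}$. Because $\gamma_G$ is $C^1$ on a compact interval it is Lipschitz and hence has Luzin's property $(N)$, so the image $E=\gamma_G(\gamma_G^{-1}(E))$ is again a null set; thus $f=0$ in $X^{\mathrm{fov}}$.

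For the ill-posedness equivalence I would use that, $Q^{\mathrm{time}}$ being injective, the inverse problem is well-posed precisely when $Q^{\mathrm{time}}$ is bounded below. If $\gamma_G'$ has no root in $[0,T/2]$, compactness gives $c:=\min_{[0,T/2]}|\gamma_G'|>0$; the inequality $|\gamma_G'(t)|^2\ge c\,|\gamma_G'(t)|$ together with the change-of-variables identity yields $\|Q^{\mathrm{time}}f\|\ge c^{1/2}\|f\|$, and one checks directly that $g\mapsto\big(y\mapsto g(\gamma_G^{-1}(y))/\gamma_G'(\gamma_G^{-1}(y))\big)$ is a bounded two-sided inverse, so the problem is well-posed. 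Conversely, suppose $\gamma_G'(t_0)=0$. Given $\varepsilon>0$, continuity of $\gamma_G'$ provides a nondegenerate subinterval $I\ni t_0$ of $[0,T/2]$ with $|\gamma_G'|<\varepsilon$ on $I$; put $J:=\gamma_G(I)$, a nondegenerate subinterval of the FOV with $\gamma_G^{-1}(J)=I$, and test the operator on the unit vector $f:=|J|^{-1/2}\mathbf 1_J$. Then, using $\int_I|\gamma_G'(t)|\,dt=|\gamma_G(I)|=|J|$,
\[
\|Q^{\mathrm{time}}f\|^2=\frac{1}{|J|}\int_I|\gamma_G'(t)|^2\,dt\le\frac{\varepsilon}{|J|}\int_I|\gamma_G'(t)|\,dt=\varepsilon,
\]
and since $\varepsilon>0$ was arbitrary $Q^{\mathrm{time}}$ is not bounded below, so the inverse problem is ill-posed; this completes the dichotomy.

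The part I expect to be the main obstacle, beyond the routine change-of-variables bookkeeping, is the measure-theoretic step in the injectivity argument: at the zeros of $\gamma_G'$ the inverse map $\gamma_G^{-1}$ is not absolutely continuous, so one cannot pull the null set $E$ back through $\gamma_G^{-1}$; instead one must push $\gamma_G^{-1}(E)$ forward through the Lipschitz map $\gamma_G$ and invoke Luzin's property $(N)$.
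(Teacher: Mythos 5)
Your proposal is correct and follows essentially the same route as the paper: a change of variables turning $Q^{\mathrm{time}}$ into a weighted composition/multiplication operator, with boundedness governed by $\max|\gamma_G'|$ and well- versus ill-posedness by whether $\min|\gamma_G'|>0$, i.e.\ whether the inverse multiplier $1/\gamma_G'$ stays bounded. The only differences are that you make explicit details the paper leaves implicit --- well-definedness on $L^2$-equivalence classes, the Lipschitz/Luzin-(N) push-forward in the injectivity step, and a concrete test-function family $|J|^{-1/2}\mathbf{1}_{\gamma_G(I)}$ in place of the paper's direct appeal to the unboundedness of the inverse operator $R^{\mathrm{time}}$ --- which adds rigor but not a different method.
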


\begin{proof}
For the operator $Q^{\mathrm{time}}$, we have
\[ \|Q^{\mathrm{time}} f\|_2^2 = \int_{0}^{T/2} |\gamma_G'(t)|^2 |f(\gamma_G(t))|^2 dt = \int_{-A/G}^{A/G} |\gamma_G'(\gamma^{-1}(x))|
|f(x)|^2 dx. \]
The operator $Q^{\mathrm{time}}$ is therefore a combination of a coordinate change and a multiplication operator in the Hilbert space $X^{\mathrm{space}}$
with the multiplier $\sqrt{|\gamma_G'(\gamma^{-1}(x))|}$. The operator norm of $Q^{\mathrm{time}}$ corresponds
to the operator norm of the multiplication operator. Since $|\gamma_G'(\gamma^{-1}(x))|$ is continuous on $[-A/G,A/G]$, the multiplication operator
and, thus, also $Q^{\mathrm{time}}$ is bounded. Since, by our assumption, the function
$|\gamma_G'(\gamma^{-1}(x))|$ vanishes only on a set of measure zero the operator $Q^{\mathrm{time}}$ is injective. For the characterization
of the ill-posedness of $Q^{\mathrm{time}}$ we consider the inverse operator
$R^{\mathrm{time}}: X^{\mathrm{time}} \to X^{\mathrm{space}}$ given by $R^{\mathrm{time}} f(x) = 1/\gamma_G'(\gamma^{-1}(x)) f (\gamma_G^{-1}(x))$.
If $\gamma_G'$ has no zero in $[0,T/2]$, then $1/\gamma_G'$ is uniformly bounded on $[0,T/2]$ and the operator $R^{\mathrm{time}}$ is continuous.
On the other hand, if $\gamma_G'$ has a root in $[0,T/2]$, then $1/\gamma_G'$ has a singularity in $[0,T/2]$ and the operator
$R^{\mathrm{time}}$ is unbounded. The unboundedness of $R^{\mathrm{time}}$ is equivalent to the ill-posedness of the linear equation.
\end{proof}

For the two main studied trajectories, the cosine function $\gamma_1$ and the sawtooth function $\gamma_2$,
the operator $Q^{\mathrm{time}}$ behaves as follows:
\begin{corollary}
For the cosine trajectory $\gamma_1$, the operator
\begin{equation}
Q_1^{\mathrm{time}} f(t) = - \frac{A}{G} \frac{2\pi}{T} \sin \frac{2\pi}{T} t \; f \left( \frac{A}{G} \cos \frac{2\pi}{T} t \right)
\end{equation}
gives rise to an ill-posed problem. For the sawtooth trajectory $\gamma_2$ the operator
\begin{equation}
Q_2^{\mathrm{time}} f(t) = - \frac{A}{G} \frac{4}{T} \; f \left( \frac{A}{G} \left( 1 - \frac{4t}{T}\right) \right)
\end{equation}
is an isometry and the problem $Q_2^{\mathrm{time}} f = g$ is well-posed.
\end{corollary}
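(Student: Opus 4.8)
The plan is to read both operators as instances of the trajectory operator analyzed in Lemma~\ref{lem:qtime}, so that the proof reduces to checking, in each case, that the hypotheses of that lemma hold and then locating the zeros of $\gamma_G'$ on $[0,\tfrac T2]$. Concretely, one verifies that the relevant $\gamma_{i,G} = \tfrac1G\gamma_i$ is a $C^1$ bijection of $[0,\tfrac T2]$ onto $[-\tfrac AG,\tfrac AG]$ with derivative vanishing on a null set, invokes Lemma~\ref{lem:qtime}, and reads off ill-posedness or well-posedness from the presence or absence of a root of $\gamma_{i,G}'$.

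For the cosine trajectory I would first record that $\gamma_{1,G}(t)=\tfrac AG\cos\tfrac{2\pi}{T}t$ is smooth and, since $\cos$ is strictly decreasing on $[0,\pi]$, is a bijection of $[0,\tfrac T2]$ onto $[-\tfrac AG,\tfrac AG]$; its derivative $\gamma_{1,G}'(t)=-\tfrac AG\tfrac{2\pi}{T}\sin\tfrac{2\pi}{T}t$ vanishes precisely at the two endpoints $t\in\{0,\tfrac T2\}$, hence on a set of measure zero, so Lemma~\ref{lem:qtime} applies. Since $\gamma_{1,G}'$ does have a root in $[0,\tfrac T2]$, the lemma gives that $Q_1^{\mathrm{time}} f=g$ is ill-posed. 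For flavor one can note that the offending factor in $R^{\mathrm{time}}$, namely $1/\gamma_{1,G}'(\gamma_{1,G}^{-1}(x))$, blows up like $(\tfrac AG-|x|)^{-1/2}$ near the boundary of the FOV, which is the concrete source of the unboundedness of the inverse.

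For the sawtooth trajectory, on $[0,\tfrac T2]$ one has $\gamma_{2,G}(t)=\tfrac AG\big(1-\tfrac{4t}{T}\big)$, an affine (hence $C^1$) bijection of $[0,\tfrac T2]$ onto $[-\tfrac AG,\tfrac AG]$ with derivative the nonzero constant $\gamma_{2,G}'(t)\equiv-\tfrac{4A}{GT}$; as this has no root, Lemma~\ref{lem:qtime} yields well-posedness. To pin down the operator, I would compute $\|Q_2^{\mathrm{time}}f\|_{X^{\mathrm{time}}}^2=\big(\tfrac{4A}{GT}\big)^2\int_0^{T/2}|f(\gamma_{2,G}(t))|^2\,dt$ and substitute $x=\gamma_{2,G}(t)$, $dx=-\tfrac{4A}{GT}\,dt$, which collapses the integral to $\tfrac{4A}{GT}\int_{-A/G}^{A/G}|f(x)|^2\,dx$; thus $\|Q_2^{\mathrm{time}}f\|^2=\tfrac{4A}{GT}\|f\|^2$, so $Q_2^{\mathrm{time}}$ is an isometry up to the normalizing constant $\sqrt{4A/(GT)}$ and in particular boundedly invertible, consistent with well-posedness.

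There is no genuine obstacle here, since the statement is a direct corollary of Lemma~\ref{lem:qtime}; the only points requiring mild care are (a) restricting the globally $T$-periodic $\gamma_1$ to the half-period $[0,\tfrac T2]$, where it is truly monotone, so that the bijectivity hypothesis is legitimately satisfied, and (b) keeping track of the multiplicative constant in the sawtooth computation, which is why the word \emph{isometry} should be understood up to the factor $\sqrt{4A/(GT)}$ (it is an isometry on the nose exactly when $T=4A/G$).
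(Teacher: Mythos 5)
Your proposal is correct and follows exactly the route the paper intends: the corollary is a direct application of Lemma~\ref{lem:qtime}, checking that $\gamma_{1,G}'$ vanishes at $t=0$ and $t=T/2$ (hence ill-posedness) while $\gamma_{2,G}'$ is a nonzero constant (hence well-posedness), which is all the paper relies on since it states the corollary without further proof. Your additional observations --- the $(\tfrac AG-|x|)^{-1/2}$ blow-up of the inverse multiplier and the fact that $Q_2^{\mathrm{time}}$ is an isometry only up to the factor $\sqrt{4A/(GT)}$ --- are accurate refinements of the paper's statement.
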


\subsection{Analysis of the composition of the building blocks and the MPI forward operator}\label{sec:AnalysisComposed}

Next, we analyze the composed operator consisting of the convolution and the restriction operator.
To obtain results for the reconstruction of the particle distribution $c$ from the measured voltage signal, 
we assume in this section that the particle density $c$ is supported in the smaller drive field FOV $[-A/G,A/G]$, i.e. the region in which the voltage signal is generated. Mathematically, this implies
$Q^{\mathrm{fov}} c = c$. Please note that on the Hilbert space $X^{\mathrm{space}}$ neither the operator $Q^{\mathrm{fov}}$ nor the operator $Q^{\mathrm{conv}}$
are compact.

\begin{theorem} \label{thm-main1}
	The composed operator $S^{\mathrm{conv}} = Q^{\mathrm{fov}} \circ Q^{\mathrm{conv}} \circ Q^{\mathrm{fov}}$ is self-adjoint,
	positive and its image $R(S^{\mathrm{conv}})$ is contained in every Sobolev space $H^s[-\frac{A}{G}, \frac{A}{G}]$.
	As such,
	\begin{equation*}
	S^{\mathrm{conv}}: L^2[-\tfrac{A}{G}, \tfrac{A}{G}]  \to  H^s[-\tfrac{A}{G}, \tfrac{A}{G}] \quad \text{is compact and injective,}
	\end{equation*}
	for any $s \in \mathbb R.$
	
\end{theorem}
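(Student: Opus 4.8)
The plan is to establish the three claimed properties — self-adjointness, positivity, and the Sobolev smoothing $R(S^{\mathrm{conv}})\subset H^s$ — and then derive compactness and injectivity as consequences. Throughout I identify $X^{\mathrm{fov}}=L^2[-\tfrac{A}{G},\tfrac{A}{G}]$ with the closed subspace $Q^{\mathrm{fov}}X^{\mathrm{space}}$ of $X^{\mathrm{space}}=L^2(\mathbb R)$, so that $S^{\mathrm{conv}}=Q^{\mathrm{fov}}Q^{\mathrm{conv}}Q^{\mathrm{fov}}$ may be regarded as an operator on $X^{\mathrm{fov}}$.

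First I would prove self-adjointness and positivity. Since $Q^{\mathrm{fov}}$ is an orthogonal projection (hence self-adjoint, by the earlier lemma), it suffices to show $Q^{\mathrm{conv}}$ is self-adjoint and positive on $X^{\mathrm{space}}$: then $(S^{\mathrm{conv}})^* = (Q^{\mathrm{fov}})^*(Q^{\mathrm{conv}})^*(Q^{\mathrm{fov}})^* = S^{\mathrm{conv}}$, and $\langle S^{\mathrm{conv}}f,f\rangle = \langle Q^{\mathrm{conv}}(Q^{\mathrm{fov}}f),Q^{\mathrm{fov}}f\rangle \ge 0$. On the Fourier side $Q^{\mathrm{conv}}$ is multiplication by $\mathcal F(\Mm_G')$, which by \eqref{eq:Fouriertransform} (up to the positive constant $a\beta G$ and rescaling) equals a positive multiple of $\omega\coth(\tfrac{\pi}{2}\omega)-|\omega|$; this multiplier is real-valued, so $Q^{\mathrm{conv}}$ is self-adjoint, and it is strictly positive (the function $\omega\coth(\tfrac\pi2\omega)-|\omega|$ is positive for all real $\omega$, with value $\tfrac2\pi$ at $\omega=0$), so $Q^{\mathrm{conv}}$ is a positive operator. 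Injectivity of $S^{\mathrm{conv}}$ follows in the same breath: $\mathcal F(\Mm_G')$ never vanishes, so $Q^{\mathrm{conv}}$ is injective on $X^{\mathrm{space}}$; and if $S^{\mathrm{conv}}f=0$ for $f\in X^{\mathrm{fov}}$ then $0=\langle S^{\mathrm{conv}}f,f\rangle = \langle Q^{\mathrm{conv}}f,f\rangle$, which by strict positivity of $Q^{\mathrm{conv}}$ forces $f=0$.

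Next, the smoothing property. Fix $f\in X^{\mathrm{fov}}$, extended by zero to $\mathbb R$. By Lemma~\ref{lemma-convolution}, $Q^{\mathrm{conv}}f = \Mm_G'\ast f \in H^s(\mathbb R)$ for every $s\ge 0$, since $\mathcal F(\Mm_G')$ decays exponentially. Restricting to the interval only improves regularity: the restriction map $H^s(\mathbb R)\to H^s[-\tfrac AG,\tfrac AG]$ is bounded for $s\ge 0$, so $S^{\mathrm{conv}}f = (Q^{\mathrm{conv}}f)|_{[-A/G,A/G]}\in H^s[-\tfrac AG,\tfrac AG]$, with a norm bound $\|S^{\mathrm{conv}}f\|_{H^s}\le C_s\|f\|_{L^2}$. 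For negative $s$ the claim is weaker still, since $H^s[-\tfrac AG,\tfrac AG]\supset L^2[-\tfrac AG,\tfrac AG]$ and we already know $S^{\mathrm{conv}}f\in L^2$; so the statement holds for every $s\in\mathbb R$.

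Finally, compactness: $S^{\mathrm{conv}}$ viewed as a map $L^2[-\tfrac AG,\tfrac AG]\to H^s[-\tfrac AG,\tfrac AG]$ factors as a bounded map $L^2\to H^{s'}$ for some $s'>\max(s,0)$, followed by the inclusion $H^{s'}[-\tfrac AG,\tfrac AG]\hookrightarrow H^s[-\tfrac AG,\tfrac AG]$, which is compact by the Rellich–Kondrachov theorem on the bounded interval whenever $s'>s$. A composition of a bounded operator with a compact one is compact, so $S^{\mathrm{conv}}:L^2\to H^s$ is compact for every $s$. The main technical point — and really the only place any care is needed — is the smoothing step: one must be explicit that the exponential decay of $\mathcal F(\Mm_G')$ established in Lemma~\ref{lemma-convolution} gives the $H^s$ bound for \emph{arbitrarily large} $s$ with a constant depending only on $s$, and that zero-extension plus restriction does not cost regularity for $s\ge0$; everything else is a formal consequence of results already in hand.
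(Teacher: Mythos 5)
Your proof is correct, and the individual claims are all established soundly; the interesting point is where it diverges from the paper's argument. For self-adjointness, positivity and the Sobolev smoothing you proceed essentially as the paper does (positivity/self-adjointness from the structure $\langle Q^{\mathrm{conv}}Q^{\mathrm{fov}}f,Q^{\mathrm{fov}}g\rangle$, smoothing from Lemma~\ref{lemma-convolution}), except that you justify positivity and injectivity through the Fourier symbol: the multiplier $\mathcal F(\Mm_G')$ from \eqref{eq:Fouriertransform} is real, strictly positive and nonvanishing, so $Q^{\mathrm{conv}}$ is strictly positive and $S^{\mathrm{conv}}f=0$ forces $f=0$ via $\langle Q^{\mathrm{conv}}f,f\rangle=0$. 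The paper instead argues injectivity by asserting that convolution of a nonzero $f$ with a positive kernel is nonzero on the field of view — a statement that is delicate for sign-changing $f$ — so your symbol-based argument is arguably the cleaner and more airtight of the two. For compactness the routes genuinely differ: the paper gives an elementary, self-contained argument by periodizing the kernel on the enlarged interval $[-4A/G,4A/G]$ with a smooth cutoff, diagonalizing the resulting convolution operator in the Fourier basis of that interval, and reading off that the eigenvalues accumulate only at $0$; you instead use boundedness of $S^{\mathrm{conv}}:L^2\to H^{s'}$ for $s'>\max(s,0)$ followed by the compact embedding $H^{s'}\hookrightarrow H^{s}$ on the bounded interval. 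This is precisely the alternative the paper itself flags in the remark after Theorem~\ref{thm-main1} (``it would also have been possible to employ the Rellich embedding theorem''); your version is shorter and avoids the cutoff construction, while the paper's version is more elementary and exhibits an explicit approximate eigenstructure that foreshadows the spectral discussion leading to Proposition~\ref{prop:exponentialdecay}. One small point of care in your route: for negative $s$ the step ``$H^{s'}\hookrightarrow H^{s}$ compact'' is not literally Rellich--Kondrachov but follows by composing the compact embedding into $L^2$ with the continuous inclusion $L^2\hookrightarrow H^{s}$ — which your choice $s'>\max(s,0)$ already accommodates, so no gap results.
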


\begin{proof}
	We note that $S^{\mathrm{conv}}$ is self-adjoint and positive since
	\begin{equation*}
	\langle S^{\mathrm{conv}} f, g \rangle_{L^2[-A/G, A/G]}  =
	\langle Q^{\mathrm{conv}}  ( Q^{\mathrm{fov}} f), Q^{\mathrm{fov}}  g \rangle_{L^2(\mathbb R)},
	\end{equation*}
	and $Q^{\mathrm{conv}}$ is self-adjoint and positive as convolution operator
	with its kernel being a positive real-valued function.	
	Since the convolution kernel of $Q^{\mathrm{conv}}$ is an analytic $C^\infty$ function, its application to
	compactly supported elements of the form $Q^{\mathrm{fov}} f$ results in a $C^\infty$ function defined on the real line. Further,
	by Lemma \ref{lemma-convolution}, $Q^{\mathrm{fov}} f$ is contained in every Sobolev space $H^s(\Rr)$, $s \geq 0$. Therefore,
	by restricting the result $Q^{\mathrm{conv}}  ( Q^{\mathrm{fov}} f)$
	to the compact interval $[-\frac{A}{G}, \frac{A}{G}]$,
	we obtain that
	$S^{\mathrm{conv}} f =	Q^{\mathrm{fov}} \circ Q^{\mathrm{conv}} \circ Q^{\mathrm{fov}} f$ is contained in every
	Sobolev space $H^s([-\frac{A}{G}, \frac{A}{G}])$ of integer order $s$, and therefore, by the inclusion relation in the Sobolev scale,
	in every Sobolev space $H^s([-\frac{A}{G}, \frac{A}{G}]),$ $s \in \Rr$.
	
	Next, we deal with the compactness of $S^{\mathrm{conv}}.$
	Instead of embedding the interval $[-\tfrac{A}{G}, \tfrac{A}{G}]$ which represents the field of view into the real line,
	(which is not suitable for showing compactness)
	we here embed it into a larger properly chosen interval $I$. We choose
	$$I=[- \tfrac{4A}{G}, \tfrac{4A}{G}].$$
	We need the blending function $\chi$ having the properties
	$$
	\chi \in C^\infty, \quad   \chi|_{[-2 A/G, 2 A/G]}=1,  \quad  \chi|_{(\mathbb R \setminus [-3 A/G, 3 A/G])}=0.
	$$
	Note that such functions are typically used in differential geometry;
	they can be constructed similar to the Leray-Friedrichs kernel.
	We consider the kernel $k,$ defined on $I,$ by
	\begin{equation}
	k(x) =  \Mm_G'(x) \cdot \chi(x).
	\end{equation}
	We have that $k$ is supported within $I$, that $k$ belongs to the class $C^\infty,$ and that $k$ and all its derivatives are periodic w.r.t.\ $I.$
	Most important, for all $f$ supported in $I,$
	\begin{equation}\label{eqref:EquOfConvs}
	\ f \ast k \ (x) = f \ast \Mm_G' (x),  \qquad \text{ for all $x \in [-A/G,A/G].$ }
	\end{equation}
	We denote the convolution operator with the kernel $k$ by $K,$
	i.e.,
	\begin{equation}
	K: L^2(I) \to H^s(I), \quad      Kf = k \ast f.
	\end{equation}
	We note that, by the same arguments as used for $S^{\mathrm{conv}}$ above, the range of $K$ is contained in $H^s,$
	and therefore, $K$ is well defined.
	In particular, $K$ applied to an element $f$ which is supported in $[-A/G,A/G],$
	results in a periodic function on $I$ with all its derivatives being periodic as well.
	
	Using the restriction operator
	$$P:  L^2(I) \to L^2[-A/G,A/G],\qquad  Pf = f|_{[-A/G,A/G]}, $$
	we can express our target operator $S^{\mathrm{conv}}$
	by
	\begin{equation} \label{eq:SbyK}
	S^{\mathrm{conv}} = P     \circ   K  \circ  P^\ast.
	\end{equation}
	Together with using the introduced operators, this is a consequence of \eqref{eqref:EquOfConvs}.
	(We note that, by slight abuse of notation, $P$ and its adjoint $P^\ast$ act on $H^s$ instead of $L^2$ as well.)
	If we can show that $K$ is compact, then, since $P$ and $P^\ast$ are bounded operators on both $L^2$ and $H^s,$
	we can conclude the compactness of $S^{\mathrm{conv}}$ from the compactness of $K.$
	
	In the following we show the compactness of $K.$		
	We may restrict to a nonnegative integer $s \in \mathbb{N}_0.$
	If the statement is shown for $s \in \mathbb{N}_0,$
	we may, for general $s' \in \mathbb R,$	
	take any integer $s \geq s',$ and conclude the compactness
	of the operator to $H^{s'}$ from its compactness as an operator to  $H^{s}$ together with the
	continuity of the embedding $H^{s} \hookrightarrow H^{s'}.$
	We consider the Fourier basis $(e_n)$ of complex exponentials $e_n$ with frequency $n$ w.r.t.\ the interval I.
	Since $K$ is a convolution operator, it maps $e_n$ to $\alpha_n e_n,$ with a sequence of scalars $(\alpha_n)$
	which are the Fourier coefficients of the kernel $k$  w.r.t.\ the interval $I.$
	Since $k$ is $C^\infty,$ its Fourier coefficients $(\alpha_n)$ decay faster that $(1+|n|^2))^{s/2},$
	for any $s.$ Further, the norm of the $e_n$ is $O(n^s)$ in $H^s.$
	Therefore, in the coefficient space w.r.t.\ $e_n$ in $L^2$ and $e_n/n^s,$ in $H^s$
	we have that the coefficient sequence $(\beta_n)$ gets mapped to $n^s \alpha_n(\beta_n).$
	Hence, the unit vectors form an eigenbasis and the eigenvalues are isolated except for $0.$
	This implies that $K$ is compact from $L^2(I)$ to $H^s(I),$ for $s \in \mathbb N_0.$
	By the derivations above, we conclude the validity for all $s \in \mathbb R.$
	Then, by \eqref{eq:SbyK}, $S^{\mathrm{conv}}$ is compact. This completes the proof of the compactness. 
    
    For a square integrable function $f$ that is non-zero in the interval $[-A/G,$ $A/G]$, also the convolution
    $Q^{\mathrm{conv}}f$ with a positive kernel is non-zero in $[-A/G,A/G]$. This implies the injectivity of $S^{\mathrm{conv}}$.
\end{proof}

\begin{remark}
	Instead of performing the elementary proof for the compactness of the operator $K$ above,
	it would also have been possible to employ the Rellich embedding theorem for this part.
\end{remark}

The particular composition of the operator $S^{\mathrm{conv}}$ has a remarkable resemblance to related problems treated in the literature, see \cite{SlepianPollak1961,GruenbaumLonghiPerlstadt1982,Widom1964,Erb2013}. In a time-frequency analysis developed by Landau, Pollak and Slepian in the sixties \cite{SlepianPollak1961} a time-frequency operator
of the form $S^{\mathrm{LPS}} = Q^{\mathrm{fov}} \circ Q^{\mathrm{sinc}} \circ P^{\mathrm{fov}}$ was studied. Instead of taking a convolution operator
based on the Langevin function, the operator $Q^{\mathrm{sinc}}$ describes the convolution with the $\mathrm{sinc}$-kernel, i.e. $Q^{\mathrm{sinc}}$ is
a projection onto bandlimited functions inside a frequency interval $I$.
The eigenfunctions of $S^{\mathrm{LPS}}$ are the solutions of a differential equations and
special functions known as prolate spheroidal wave functions. The eigenfunctions
corresponding to the largest eigenvalues turn out to be well localized in the frequency domain inside
the interval $I$. For the actual operator $S^{\mathrm{conv}}$ we expect a similar behavior of the eigenfunctions, i.e. we expect that
the eigenfunctions corresponding to the largest eigenvalues will have a Fourier transform that is to a large part supported at small frequencies.
The asymptotic behavior of the eigenvalues of $S^{\mathrm{conv}}$ can be derived analytically. 
The general result \cite[Theorem II]{Widom1964} on integral equations in combination with the formula \eqref{eq:Fouriertransform} for the Fourier transform of the convolution kernel $\Mm_G'$ yields 
the following decay of the eigenvalues.

\begin{proposition} \label{prop:exponentialdecay}
Let $\sigma_1 \geq \sigma_2 \geq \ldots$ denote the decreasingly ordered eigenvalues of the 
compact positive definite operator $S^{\mathrm{conv}}$. The eigenvalues $\sigma_n$ decay exponentially 
with rate
\[ \ln \sigma_n \sim - n \pi \frac{K(\mathrm{sech} (\beta A))}{K(\mathrm{tanh} (\beta A))},\]
where $K(t) = \int_0^{\pi/2} (1- t^2 \sin^2(\theta))^{-1/2} d \theta$ denotes the
complete elliptic integral of the first kind. 
\end{proposition}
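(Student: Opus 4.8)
The plan is to realize $S^{\mathrm{conv}}$ as a finite convolution operator on an interval and then to apply Widom's asymptotic eigenvalue theorem \cite[Theorem~II]{Widom1964}. By Theorem~\ref{thm-main1}, $S^{\mathrm{conv}}$ acts on $L^2[-\frac{A}{G},\frac{A}{G}]$ as the integral operator with convolution kernel $\Mm_G'(x-y)$, and it is self-adjoint, positive, injective and compact; hence its spectrum is a strictly decreasing null sequence $\sigma_1\ge\sigma_2\ge\cdots>0$, so that $\ln\sigma_n$ is well defined and the claimed asymptotics make sense. Rescaling by the unitary $f(x)\mapsto f(\tfrac{A}{G}\xi)$, $\xi\in[-1,1]$, turns $S^{\mathrm{conv}}$ into (a positive multiple of) the convolution operator on $L^2[-1,1]$ with kernel $a\beta A\,L'(\beta A\,\cdot)$, whose symbol equals, by \eqref{eq:Fouriertransform} and up to a positive constant,
\[
\omega\coth\!\Bigl(\tfrac{\pi\omega}{2\beta A}\Bigr)-|\omega|\;=\;\frac{2|\omega|}{e^{\pi|\omega|/(\beta A)}-1}\,.
\]
This rescaling also explains why $A$ and $G$ enter the final rate only through $\beta A$: the decay rate is invariant under joint scaling of the interval and the kernel, so it depends on $A/G$ and $\beta G$ only through the product $\beta A$.

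Next I would record the properties of this symbol that Widom's theorem requires: it is positive, even, integrable, real-analytic on $\Rr$, it decays like $e^{-\pi|\omega|/(\beta A)}$, and it extends to a meromorphic function of $\omega\in\mathbb C$ whose only singularities are the simple poles at $2i\beta A\,m$, $m\in\mathbb Z\setminus\{0\}$; in particular it is holomorphic and positive on the strip $|\operatorname{Im}\omega|<2\beta A$. Under these hypotheses \cite[Theorem~II]{Widom1964} yields, for the finite convolution on $[-1,1]$, a geometric decay of the eigenvalues, $\ln\sigma_n\sim -n\rho$, where the rate $\rho$ is $\pi$ times the modulus of the conformal map canonically associated with the symbol (equivalently, $\sigma_n^{1/n}$ converges to the nome of the corresponding rectangle).

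The remaining, and principal, task is to evaluate this modulus for the symbol at hand. Because its poles form the arithmetic progression $2i\beta A\,\mathbb Z\setminus\{0\}$, the conformal map in question is a branch of a Jacobi elliptic function, and the associated rectangle has side ratio $K(k')/K(k)$ with complementary moduli $k,k'$ determined by $\beta A$. Carrying out this identification — the routine but delicate part of the argument — gives $k=\mathrm{sech}(\beta A)$ and $k'=\mathrm{tanh}(\beta A)$, which are indeed complementary since $\mathrm{sech}^2+\mathrm{tanh}^2=1$, and hence $\rho=\pi\,K(\mathrm{sech}(\beta A))/K(\mathrm{tanh}(\beta A))$, which is the assertion.

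The main obstacle is twofold. First, one must check that $\mathcal F(\Mm_G')$ satisfies every technical hypothesis of Widom's theorem — positivity on and near the boundary of its strip of analyticity, the precise decay and growth conditions, and the treatment of the truncated interval. Second, one must do the elliptic-function bookkeeping that turns the abstract conformal modulus furnished by Widom's formula into the explicit ratio $K(\mathrm{sech}\,\beta A)/K(\mathrm{tanh}\,\beta A)$; getting both the moduli and the prefactor $\pi$ right requires care. As a consistency check I would inspect the limits $\beta A\to 0$ and $\beta A\to\infty$: the former should produce the fast rate $\rho\sim 2\ln(4/\beta A)$ (a nearly band-limited, strongly smoothing kernel, matching the logarithmic divergence of $K(\mathrm{sech}\,\beta A)$), and the latter the slow rate $\rho\sim \pi^2/(2\beta A)$ (a spiky kernel with slowly decaying symbol, matching $K(\mathrm{tanh}\,\beta A)\sim\beta A$).
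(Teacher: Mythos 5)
Your proposal is correct and follows essentially the paper's own route: rescale $S^{\mathrm{conv}}$ to the normalized interval, read off from \eqref{eq:Fouriertransform} that the symbol decays exponentially with rate $\pi/(\beta A)$, and apply Widom's Theorem II; moreover, the ``elliptic-function bookkeeping'' you list as the principal remaining task is already packaged in Widom's theorem, which delivers the rate directly as $\pi K(\mathrm{sech}(\beta A))/K(\mathrm{tanh}(\beta A))$ in terms of the exponential decay rate of the symbol and the interval half-length, which is exactly how the paper concludes. The one inaccuracy is your claim that the symbol $2|\omega|/(e^{\pi|\omega|/(\beta A)}-1)$ is real-analytic on $\mathbb{R}$ and holomorphic on the strip $|\operatorname{Im}\omega|<2\beta A$: the $|\omega|$ kink at the origin (reflecting the $1/x^{2}$ tail of $L'$) destroys analyticity there, but this is harmless since Widom's hypotheses concern positivity and the asymptotic behavior of the symbol at infinity rather than smoothness at $\omega=0$.
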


\begin{proof}
By formula \eqref{eq:Fouriertransform}, the Fourier transform of the derivative $L'$ has an asymptotic
exponential decay with rate $\ln (\widehat{L'}(\omega)) \sim - \pi \omega$. This implies that the Fourier transform of $\Mm_G'$ has an asymptotic exponential decay rate of $\ln (\widehat{\Mm_G'}(\omega)) \sim - \frac{\pi}{\beta G} \omega$. Normalizing the field of view $[-A/G, A/G]$ to the 
interval $[-1,1]$ changes the asymptotic exponential decay rate to $-\frac{\pi}{\beta A} \omega$. Now applying \cite[Theorem II]{Widom1964} yields directly the statement of the 
theorem. 
\end{proof}

For the principal imaging operator $S^{\mathrm{t}}$, we obtain the following main statement.

\begin{theorem} \label{thm-main2}
    If $\gamma \in C^{\infty}[0,T/2]$ then the image $R(S^{\mathrm{t}})$ is contained in every Sobolev space $H^s[0,T/2]$, $s \geq 0$.
    Restricting $S^{\mathrm{t}}$ to the space $X^{\mathrm{fov}}$ and assuming that the trajectory $\gamma$ contains only finitely many
    zeros in $[0,T/2]$ the operator
	\begin{equation*}
	S^{\mathrm{t}}: X^{\mathrm{fov}}  \to  H^s[0,T/2] \quad \text{is compact and injective.}
	\end{equation*}
    In particular, the inverse problem stated in \eqref{sec2:eq3} is severely ill-posed.
\end{theorem}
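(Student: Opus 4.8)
The plan is to read everything off the decomposition $S^{\mathrm{t}} = Q^{\mathrm{time}}\circ Q^{\mathrm{fov}}\circ Q^{\mathrm{conv}}$ from \eqref{eq:DecompGeneral} (respectively the explicit formula \eqref{eq:imagingtime}) together with the regularity, compactness and injectivity facts already collected in Lemma~\ref{lemma-convolution}, Lemma~\ref{lem:qtime}, Theorem~\ref{thm-main1} and Proposition~\ref{prop:exponentialdecay}. First I would prove the statement about the range. By \eqref{eq:imagingtime}, $S^{\mathrm{t}}c(t)=\gamma_G'(t)\,(c\ast\Mm_G')(\gamma_G(t))$ for any $c\in X^{\mathrm{space}}$. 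By Lemma~\ref{lemma-convolution}, $c\ast\Mm_G'$ lies in $H^s(\Rr)$ for every $s\ge 0$, hence is a $C^\infty$ function on $\Rr$ with all derivatives bounded. Restricting it to the compact range $[-\frac{A}{G},\frac{A}{G}]$ of $\gamma_G$, composing with the map $\gamma_G=\gamma/G$ (this is where the hypothesis $\gamma\in C^\infty[0,T/2]$ is used) and multiplying by $\gamma_G'\in C^\infty[0,T/2]$, the chain and product rules give $S^{\mathrm{t}}c\in C^\infty[0,T/2]\subset H^s[0,T/2]$ for every $s\ge 0$, which is the first assertion.

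Next I would prove compactness of $S^{\mathrm{t}}|_{X^{\mathrm{fov}}}$. Restricting the domain to $X^{\mathrm{fov}}$ amounts to precomposing with the zero-extension $X^{\mathrm{fov}}\hookrightarrow X^{\mathrm{space}}$, on which $Q^{\mathrm{fov}}$ acts as the identity, so $S^{\mathrm{t}}|_{X^{\mathrm{fov}}}=Q^{\mathrm{time}}\circ\bigl(Q^{\mathrm{fov}}\circ Q^{\mathrm{conv}}\circ Q^{\mathrm{fov}}\bigr)=Q^{\mathrm{time}}\circ S^{\mathrm{conv}}$. By Theorem~\ref{thm-main1}, $S^{\mathrm{conv}}\colon X^{\mathrm{fov}}\to H^{r}[-\frac{A}{G},\frac{A}{G}]$ is compact for every $r$, so it suffices to exhibit, for a given $s\ge0$, an $r$ for which $Q^{\mathrm{time}}$ maps $H^{r}[-\frac{A}{G},\frac{A}{G}]$ boundedly into $H^{s}[0,T/2]$; then $S^{\mathrm{t}}|_{X^{\mathrm{fov}}}$ is compact as a bounded operator composed with a compact one. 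I would choose an integer $k\ge s$ and $r=k+1$, and factor $Q^{\mathrm{time}}$ as $H^{k+1}[-\frac{A}{G},\frac{A}{G}]\hookrightarrow C^{k}[-\frac{A}{G},\frac{A}{G}]\xrightarrow{\ f\mapsto\gamma_G'\cdot(f\circ\gamma_G)\ } C^{k}[0,T/2]\hookrightarrow H^{k}[0,T/2]\hookrightarrow H^{s}[0,T/2]$, where the outer maps are Sobolev embeddings and the middle map is bounded because, by Fa\`a di Bruno's formula and the product rule, the derivatives up to order $k$ of $\gamma_G'\cdot(f\circ\gamma_G)$ are finite sums of products of (bounded) derivatives of $\gamma_G,\gamma_G'$ with derivatives of $f$ evaluated along $\gamma_G$. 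Note that this step uses only $\gamma_G\in C^\infty$ on the compact interval, not any condition on the zeros of $\gamma_G'$.

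For injectivity and the ill-posedness claim I would argue as follows. The operator $S^{\mathrm{conv}}$ is injective by Theorem~\ref{thm-main1}, and $Q^{\mathrm{time}}$ is injective by Lemma~\ref{lem:qtime}, since the finiteness hypothesis of the theorem guarantees that $\gamma_G'$ vanishes only on a set of measure zero; injectivity being purely set-theoretic, the composition $S^{\mathrm{t}}|_{X^{\mathrm{fov}}}=Q^{\mathrm{time}}\circ S^{\mathrm{conv}}$ is injective. A compact injective operator with infinite-dimensional domain has non-closed range, so the problem \eqref{sec2:eq3} is ill-posed. For the quantitative ``severe'' part, I would view $S^{\mathrm{t}}|_{X^{\mathrm{fov}}}=Q^{\mathrm{time}}\circ S^{\mathrm{conv}}$ as a compact operator into $X^{\mathrm{time}}=L^2[0,T/2]$ and use $\sigma_n(S^{\mathrm{t}}|_{X^{\mathrm{fov}}})\le\|Q^{\mathrm{time}}\|\,\sigma_n(S^{\mathrm{conv}})$ together with the exponential decay of $\sigma_n(S^{\mathrm{conv}})$ from Proposition~\ref{prop:exponentialdecay}; hence the singular values of $S^{\mathrm{t}}$ decay faster than any polynomial, which is the usual meaning of severe ill-posedness.

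The main obstacle is the boundedness of $Q^{\mathrm{time}}$ between Sobolev scales: since $\gamma_G$ need not be a diffeomorphism when $\gamma_G'$ has zeros, a direct change-of-variables estimate for $\|f\circ\gamma_G\|_{H^s}$ degenerates at the critical values, so passing through the spaces $C^k$ via Sobolev embedding on both sides — where composition with a smooth map of a compact interval is harmless — is the clean route, and everything else is bookkeeping with the decomposition and with results already proven.
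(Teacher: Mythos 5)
Your proposal is correct and follows essentially the same route as the paper: the same decomposition $S^{\mathrm{t}}|_{X^{\mathrm{fov}}}=Q^{\mathrm{time}}\circ S^{\mathrm{conv}}$, with smoothness of the range from Lemma~\ref{lemma-convolution}, compactness and injectivity of $S^{\mathrm{conv}}$ from Theorem~\ref{thm-main1}, and boundedness/injectivity of $Q^{\mathrm{time}}$ from Lemma~\ref{lem:qtime}. In fact your treatment is slightly more careful than the paper's, which only invokes the $L^2\to L^2$ boundedness of $Q^{\mathrm{time}}$: your explicit factorization $H^{k+1}\hookrightarrow C^k \to C^k \hookrightarrow H^s$ justifies compactness into $H^s[0,T/2]$ (not just $L^2$), and your singular-value estimate via Proposition~\ref{prop:exponentialdecay} gives a quantitative backing for ``severely ill-posed'' that the paper leaves implicit.
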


\begin{proof}
From Lemma \ref{lemma-convolution} we know that $ Q^{\mathrm{conv}} c$ is 
contained in every Sobolev space $H^s(\Rr)$. Therefore, since $\gamma$ is infinitely smooth, also the function
$S^{\mathrm{t}} c (t) = \gamma_G'(t) \cdot$ $(Q^{\mathrm{conv}} c)( \gamma_G(t))$ is infinitely times differentiable
and, thus, contained in every Sobolev space $H^s[0,T/2]$. Restricted to the space $X^{\mathrm{fov}}$ 
the operator $S^{\mathrm{t}}$ can be written as
\[ S^{\mathrm{t}} = Q^{\mathrm{time}} \circ S^{\mathrm{\conv}}.\]
Since $S^{\mathrm{\conv}}$ is compact and injective by Theorem \ref{thm-main1} and $Q^{\mathrm{time}}$ is bounded and injective by Lemma \ref{lem:qtime},
the composition $S^{\mathrm{t}}$ is compact and injective. 
\end{proof}

Finally, using the cosine trajectory $\gamma_1$, we get the following additional statement for 
the composed operator $S_1^{\mathrm{freq}}$. By $c_\infty$, we 
denote the space of rapidly decaying sequences, i.e. the space of sequences $(c_n)_{n \in \Nn}$ such 
that $\max_{n} c_n n^{\alpha} < \infty$ for all $\alpha \geq 0$.  

\begin{theorem}
The image $R(S_1^{\mathrm{f}})$ of the operator $S_1^{\mathrm{f}}$ is contained in the space $c_\infty$ 
of rapidly decaying sequences. The operator 
\begin{equation*}
S_1^{\mathrm{f}}: X^{\mathrm{fov}}  \to  c_\infty \quad \text{is compact and injective.}
\end{equation*}
Using the trajectory $\gamma_1$, the inverse problem stated in \eqref{sec2:eq3b} is therefore severely ill-posed. 
\end{theorem}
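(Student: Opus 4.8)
The plan is to follow the pattern of the proof of Theorem~\ref{thm-main2}. Since we restrict $S_1^{\mathrm{f}}$ to $X^{\mathrm{fov}}$ we have $Q^{\mathrm{fov}}c=c$, so by Lemma~\ref{lemma:1} (equivalently, by $S_1^{\mathrm{f}}=Q^{\mathrm{fft}}\circ S_1^{\mathrm{t}}$ together with the identity $S_1^{\mathrm{t}}\big|_{X^{\mathrm{fov}}}=Q_1^{\mathrm{time}}\circ S^{\mathrm{conv}}$ used in the proof of Theorem~\ref{thm-main2})
\[ S_1^{\mathrm{f}}\big|_{X^{\mathrm{fov}}}=Q^{\mathrm{fft}}\circ Q_1^{\mathrm{time}}\circ S^{\mathrm{conv}}=Q^{\mathrm{chebT}}\circ Q^{\mathrm{emb}}\circ S^{\mathrm{conv}}. \]
Each asserted property will then be read off from this factorization, all of whose factors have already been analysed above.

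\emph{Range in $c_\infty$.} By Lemma~\ref{lemma-convolution} the function $g:=c\ast\Mm_G'$ is $C^\infty$ on all of $\Rr$, and the maps $t\mapsto\cos\tfrac{2\pi}{T}t$ and $t\mapsto\sin\tfrac{2\pi}{T}t$ are entire and $T$-periodic. Hence
\[ v(t):=S_1^{\mathrm{t}}c(t)=-\tfrac{A}{G}\tfrac{2\pi}{T}\sin\!\big(\tfrac{2\pi}{T}t\big)\,g\!\big(\tfrac{A}{G}\cos\tfrac{2\pi}{T}t\big) \]
extends to a $C^\infty$, $T$-periodic function on $\Rr$ that is odd (because $\sin$ is odd and $\cos$ even). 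Its sine--Fourier coefficients therefore decay faster than any power of $n$, i.e.\ $S_1^{\mathrm{f}}c=Q^{\mathrm{fft}}v\in c_\infty$. Alternatively, substituting $y=\tfrac{A}{G}\cos\theta$ in the integral formula of Lemma~\ref{lemma:1} identifies $S_1^{\mathrm{f}}c(n)$ with a constant multiple of the $n$-th sine coefficient of the $C^\infty$ $2\pi$-periodic odd function $\theta\mapsto g(\tfrac{A}{G}\cos\theta)\sin\theta$, and the rapid decay follows as before.

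\emph{Injectivity and compactness.} Injectivity follows by chaining injectivity of the three factors: $S^{\mathrm{conv}}$ is injective by Theorem~\ref{thm-main1}; $Q_1^{\mathrm{time}}$ is injective by Lemma~\ref{lem:qtime}, since the rescaled cosine trajectory is an analytic bijection of $[0,T/2]$ onto $[-A/G,A/G]$ whose derivative $-A\tfrac{2\pi}{T}\sin\tfrac{2\pi}{T}t$ vanishes only at $t=0,T/2$; and $Q^{\mathrm{fft}}$ is an isometry. For compactness, $S^{\mathrm{conv}}\colon X^{\mathrm{fov}}\to X^{\mathrm{fov}}$ is compact by Theorem~\ref{thm-main1}, $Q_1^{\mathrm{time}}$ is bounded, and $Q^{\mathrm{fft}}$ is an isometry, so $S_1^{\mathrm{f}}\big|_{X^{\mathrm{fov}}}$ is compact as an operator into $X^{\mathrm{freq}}=l^2(\Nn)$; the stronger statement of compactness into $c_\infty$ -- equipped with the weighted norms $(c_n)\mapsto\|(n^\alpha c_n)\|_{l^2(\Nn)}$, $\alpha\ge 0$ -- is obtained by instead factoring $S_1^{\mathrm{f}}=Q^{\mathrm{fft}}\circ S_1^{\mathrm{t}}$, invoking compactness of $S_1^{\mathrm{t}}\colon X^{\mathrm{fov}}\to H^s[0,T/2]$ from Theorem~\ref{thm-main2}, and noting that $Q^{\mathrm{fft}}$ is bounded from the closed subspace of odd, $T$-periodically $H^s$-extendable functions into the corresponding weighted sequence space -- a subspace that contains $R(S_1^{\mathrm{t}})$ by the previous paragraph.

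\emph{Severe ill-posedness, and the main obstacle.} Since $Q^{\mathrm{fft}}$ is an isometry, $\|S_1^{\mathrm{f}}c\|=\|Q_1^{\mathrm{time}}S^{\mathrm{conv}}c\|\le\|Q_1^{\mathrm{time}}\|\,\|S^{\mathrm{conv}}c\|$ for every $c\in X^{\mathrm{fov}}$, whence by the min--max characterization the singular values obey $s_n(S_1^{\mathrm{f}})\le\|Q_1^{\mathrm{time}}\|\,s_n(S^{\mathrm{conv}})$; Proposition~\ref{prop:exponentialdecay} then yields at least exponential decay of the $s_n(S_1^{\mathrm{f}})$, which together with injectivity shows that the inverse problem \eqref{sec2:eq3b} is severely ill-posed. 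I expect the only genuinely delicate point to be the precise meaning of ``compact into $c_\infty$'': one must fix the topology on $c_\infty$ and check that the mapping properties of $Q^{\mathrm{fft}}$ and $Q_1^{\mathrm{time}}$ persist on the relevant Sobolev-type subspaces, using essentially that $\gamma_1'$ vanishes exactly at the endpoints $0,T/2$ so that $v$ genuinely extends to a globally smooth odd periodic function -- the point at which the special structure of the cosine trajectory, not merely smoothness of $\gamma$, enters. Everything else is immediate from the results established above.
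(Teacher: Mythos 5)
Your proposal is correct and follows essentially the same route as the paper: the same smoothness--oddness--periodicity argument showing $S_1^{\mathrm{t}}c$ is an odd $T$-periodic $C^\infty$ function (hence rapidly decaying sine coefficients), and the same factorization $S_1^{\mathrm{f}}\big|_{X^{\mathrm{fov}}}=Q^{\mathrm{fft}}\circ S_1^{\mathrm{t}}\circ Q^{\mathrm{fov}}=Q^{\mathrm{fft}}\circ Q_1^{\mathrm{time}}\circ S^{\mathrm{conv}}$ with compactness and injectivity inherited from Theorem~\ref{thm-main2} (equivalently Theorem~\ref{thm-main1} and Lemma~\ref{lem:qtime}) and the isometry $Q^{\mathrm{fft}}$. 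Your added singular-value comparison $s_n(S_1^{\mathrm{f}})\le\|Q_1^{\mathrm{time}}\|\,s_n(S^{\mathrm{conv}})$ via Proposition~\ref{prop:exponentialdecay}, and your attention to the topology on $c_\infty$, go slightly beyond the paper's proof but do not change the approach.
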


\begin{proof}
By Lemma \ref{lemma-convolution}, the convolution $ Q^{\mathrm{conv}} c$ is infinitely smooth. The scaled cosine function
$\gamma_{1,G} = A/G \cos \frac{2\pi}{T}t$ is an infinitely smooth $T$-periodic function on the real line. Thus, also
$S_1^{\mathrm{t}} c (t) = \gamma_{1,G}'(t) (Q^{\mathrm{conv}} c) ( \gamma_{1,G}(t))$ is an infinitely times differentiable
and $T$-periodic function on the real line. Additionally, since $\gamma_{1,G}$ is even and $\gamma_{1,G}'$ is odd, the function $S_1^{\mathrm{t}} c$
is an odd function in $t$. Therefore, we can conclude that $S_1^{\mathrm{t}} c$ is an odd $T$-periodic Schwartz function. This implies
that $S_1^{\mathrm{t}} c$ can be expanded in a sine series in which the sequence of coefficients $S_1^{\mathrm{f}} c(n)$, $n \in \Nn$, decays rapidly. 

The operator $S_1^{\mathrm{f}}$ restricted to the space $X^{\mathrm{fov}}$ can be written as 
$S_1^{\mathrm{f}} = Q^{\mathrm{ft}} \circ S_1^{\mathrm{t}} \circ Q^{\mathrm{fov}}$. The composition $S_1^{\mathrm{t}} \circ Q^{\mathrm{fov}}$ is 
injective and compact by Theorem \ref{thm-main2} and $Q^{\mathrm{ft}}$ is an isometry. Thus, also $S_1^{\mathrm{f}}$ is compact and injective
if restricted to $X^{\mathrm{fov}}$. 
\end{proof}

\section{Numerical Studies}  \label{sec:numerics}

\begin{figure} \centering
	\includegraphics[width=0.45\textwidth]{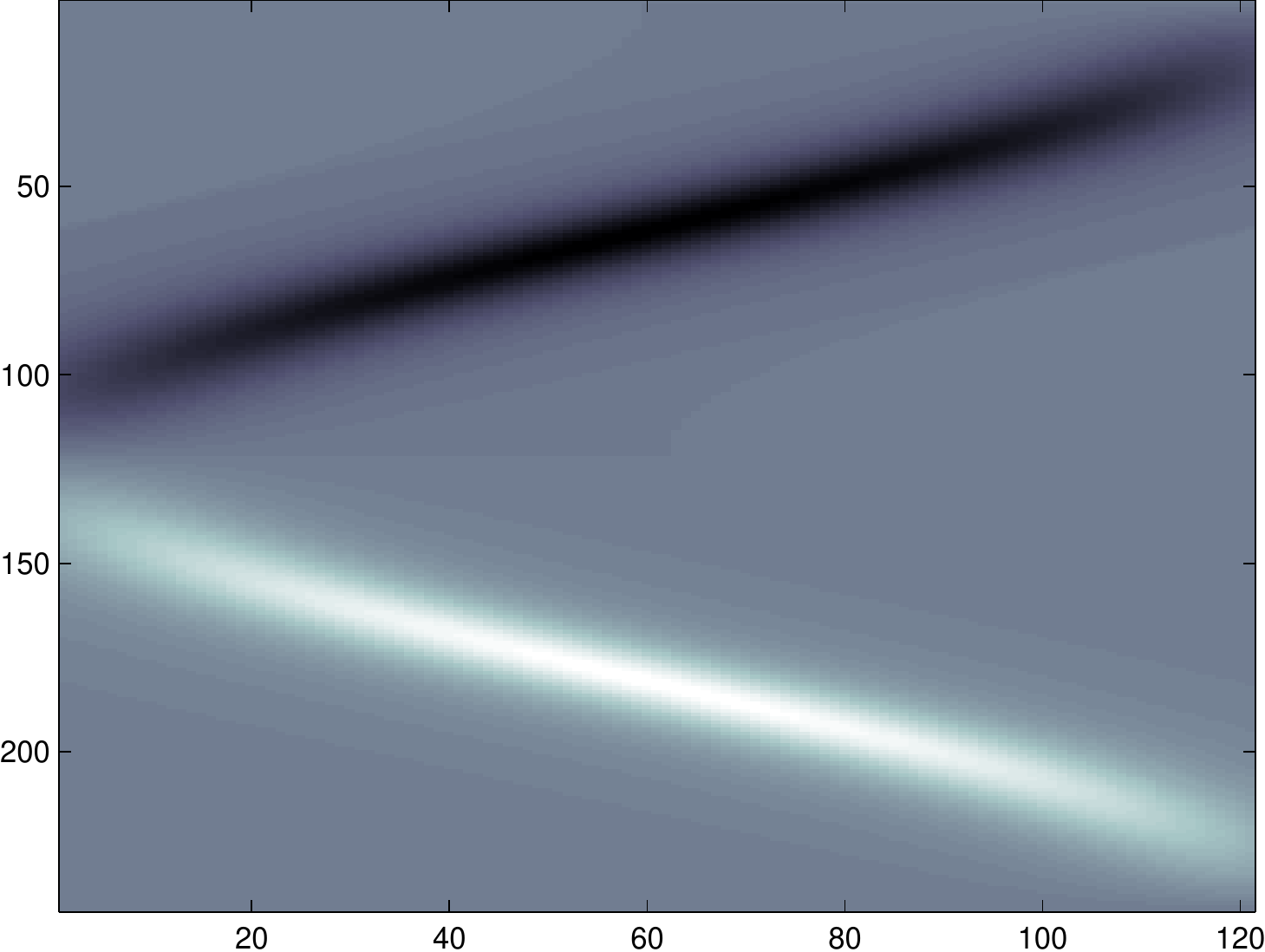} \hspace{4mm}
	\includegraphics[width=0.45\textwidth]{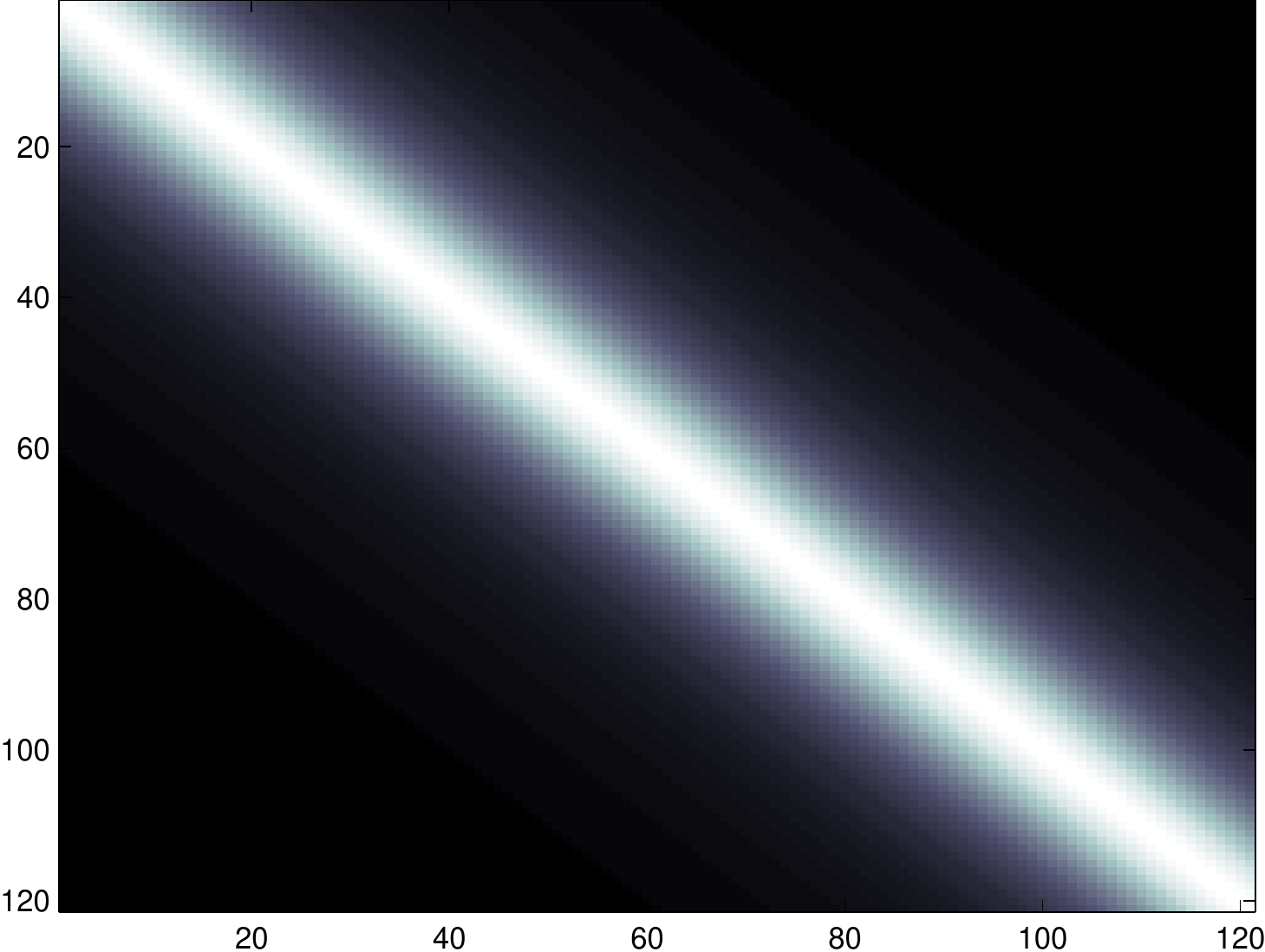}
	\caption{
	{\em Visualization of the sparsity of the discretization of the operators $S^{\mathrm{conv}}$ and $S^{\mathrm{t}}$ 
with $G = 100$ and $A/G = 50$.}	
	{\em Left:} $S^{\mathrm{t}} \in \Rr^{2N \times N}$ with $N = 120;$ gray level $128$ indicates that the corresponding matrix entry equals $0,$ black and white indicate negative and positive entries, respectively.
	{\em Right:} $S^{\mathrm{conv}} \in \Rr^{N \times N}$ with $N = 120$; white indicates a large positive value of the matrix entry whereas dark black indicates a matrix entry equal to $0$ here.
}		
		 \label{fig:Stime}
\end{figure}

% Set dimensions of the next figure
\setlength{\figureheight}{4.5cm} \setlength{\figurewidth}{0.42\textwidth}

\begin{figure}
	%\scriptsize{
	%  \input{Example1_G125.tex}\hfill{}
	%  \input{Example1_G250.tex}
	%}
	\hspace{-7mm} \includegraphics[width=.55\textwidth]{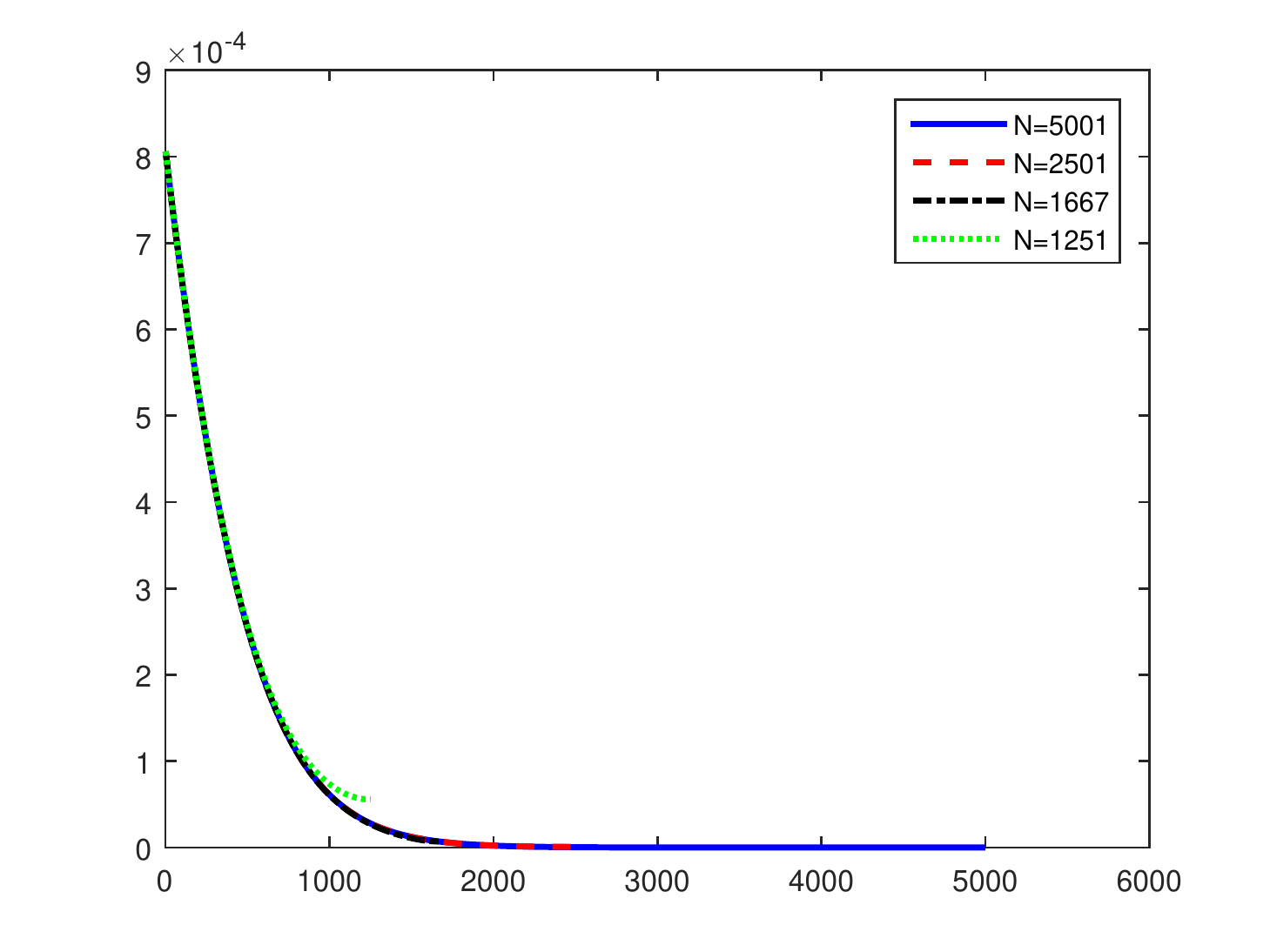} 
	\hspace{-6mm} \includegraphics[width=.55\textwidth]{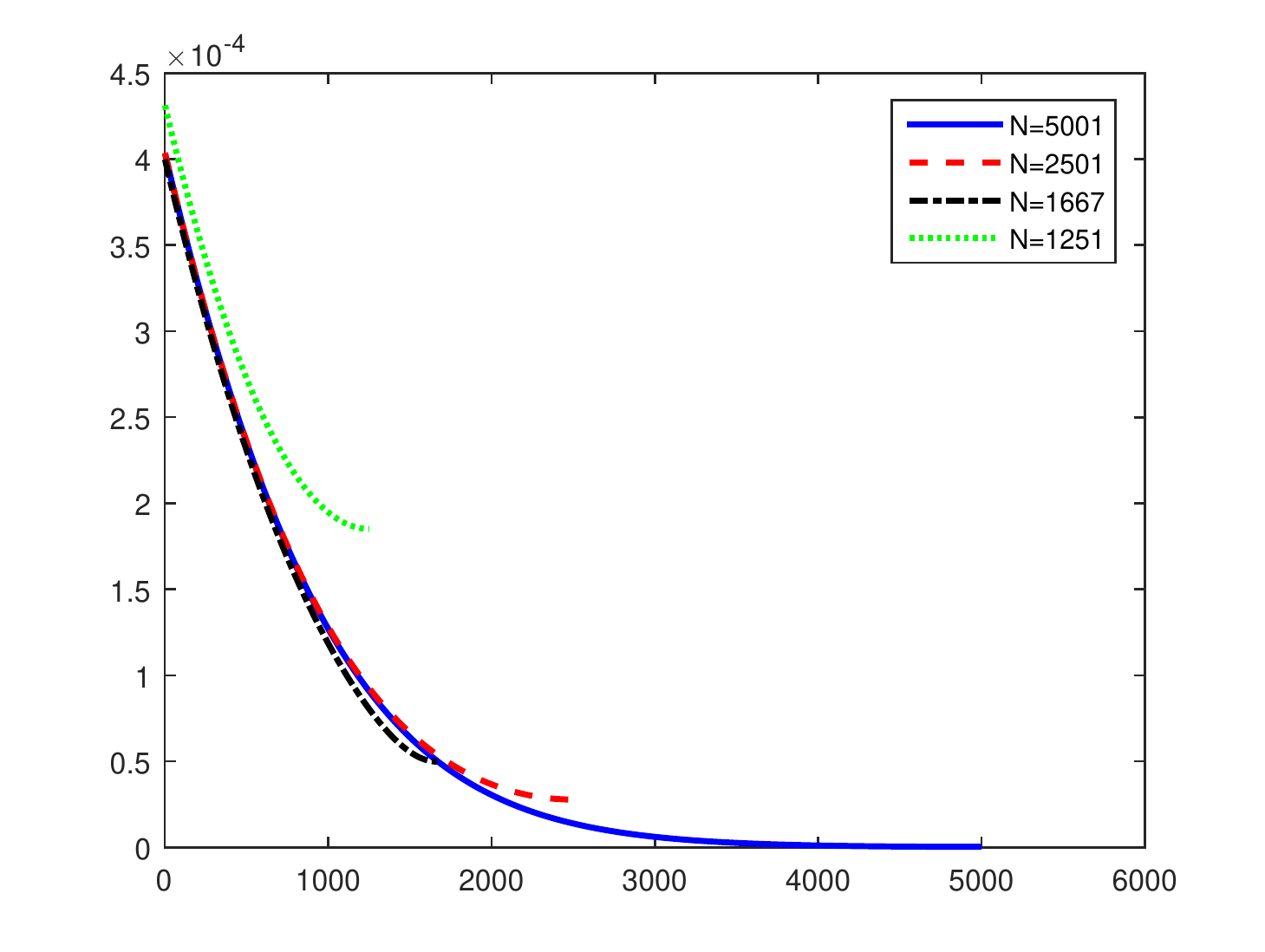}
	\caption{{\em Singular values of discretizations of $S^{\mathrm{conv}}.$}
		{\em Left:} For a field of view width $A/G=125,$ we discretized $S^{\mathrm{conv}}$ using $N$ sample points.
		We display the corresponding singular values in decreasing order.
		{\em Right:} For a field of view width $A/G=250,$ we discretized $S^{\mathrm{conv}}$ using $N$ sample points.
		We display the corresponding singular values in decreasing order as well.
		For a smaller ratio $A/G,$ we observe a good approximation already for rougher discretizations.}
	\label{fig:ApproxSingSconv}	
\end{figure}

\begin{figure}
	%\setlength{\figureheight}{4.7cm} \setlength{\figurewidth}{0.42\textwidth}
	%\scriptsize{
	%  \input{Example3_G125.tex}\hfill{}
	%  \input{Example3_G250.tex}
	%}
	\hspace{-5mm}\includegraphics[width=.55\textwidth]{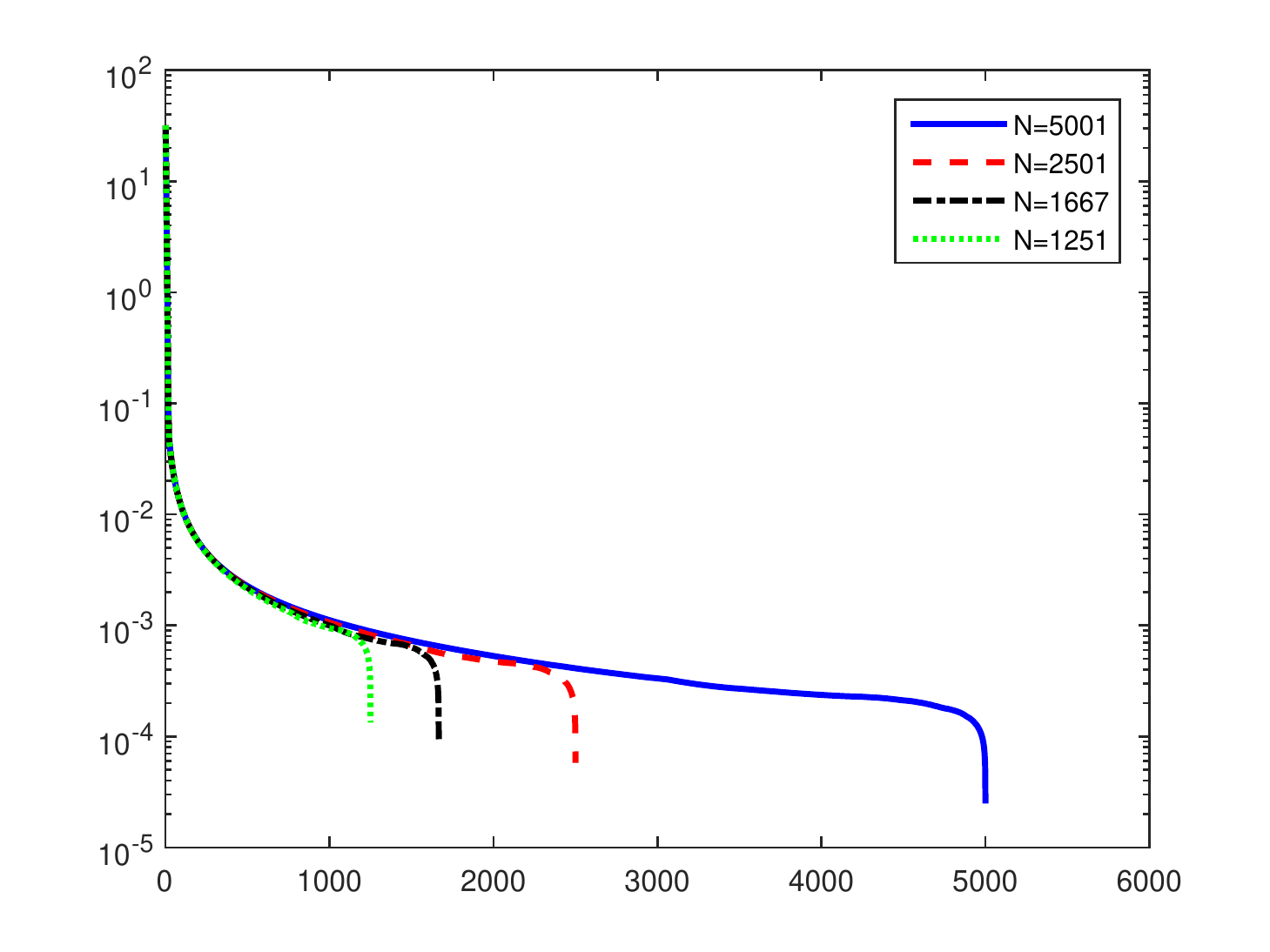}
	\hspace{-6mm}\includegraphics[width=.55\textwidth]{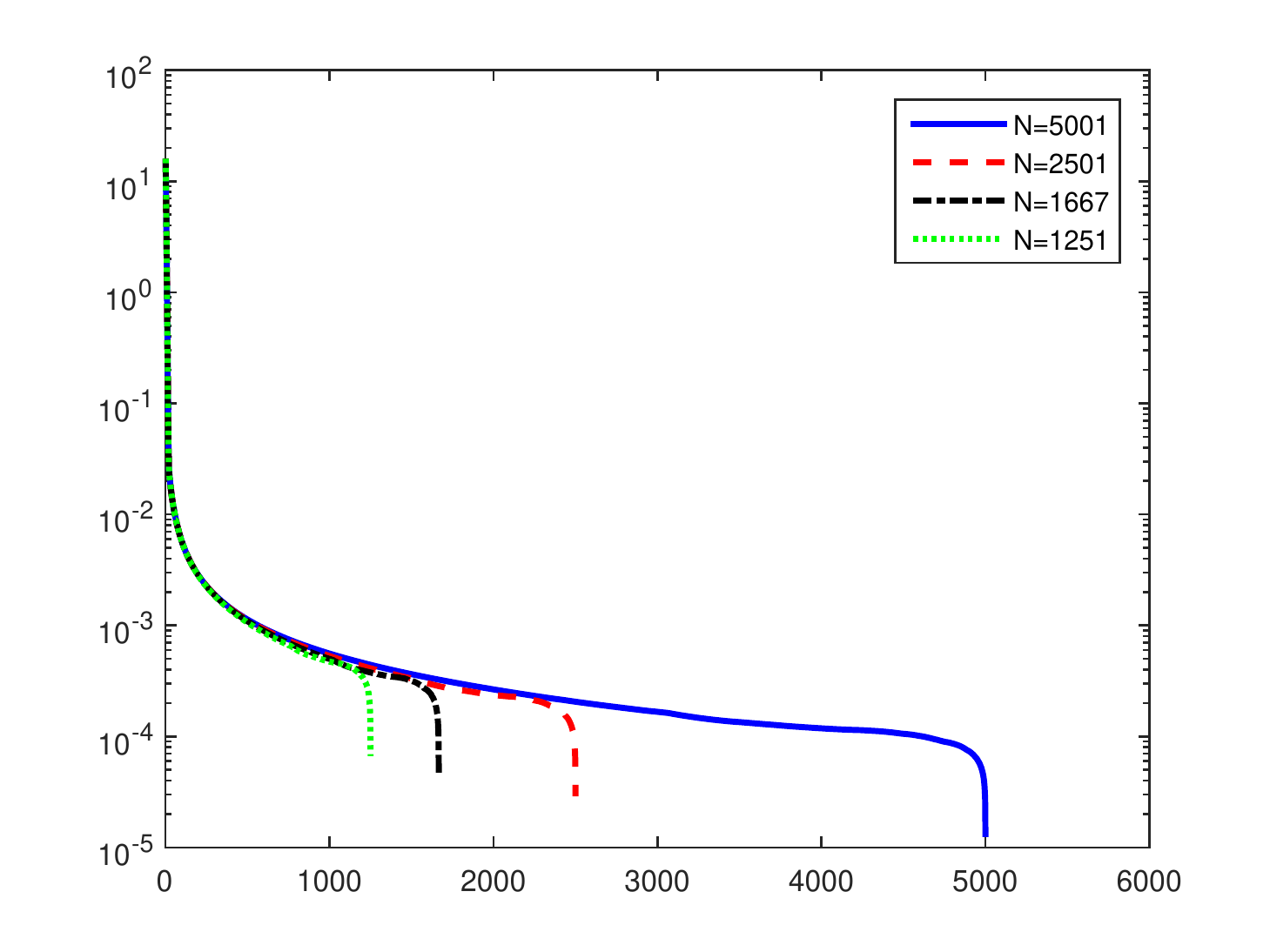}
	\caption{
		{\em Singular values of discretizations of $S^{\mathrm{t}}.$}
		{\em Left:} For a field of view width $A/G=125,$ we discretized $S^{\mathrm{t}}$ using $N$ sample points.
		We display the corresponding singular values in decreasing order.
		{\em Right:} For a field of view width $A/G=250,$ we discretized $S^{\mathrm{t}}$ using $N$ sample points.
		We display the corresponding singular values in decreasing order as well.
		We observe that the singular values initially decrease extremly fast. 	
	    (Therefore, we use a logarithmic scale for $y$-axis here.)
		Further, we observe that there are some very small singular values for each discretization. 		
		}	
	\label{fig:ApproxSingSt}	
\end{figure}

\begin{figure}
	\setlength{\figureheight}{5cm} \setlength{\figurewidth}{0.75\textwidth}
	%\centering{\scriptsize{
	%  \input{Example2_5000.tex}
	%}}
	\includegraphics[width=1\textwidth]{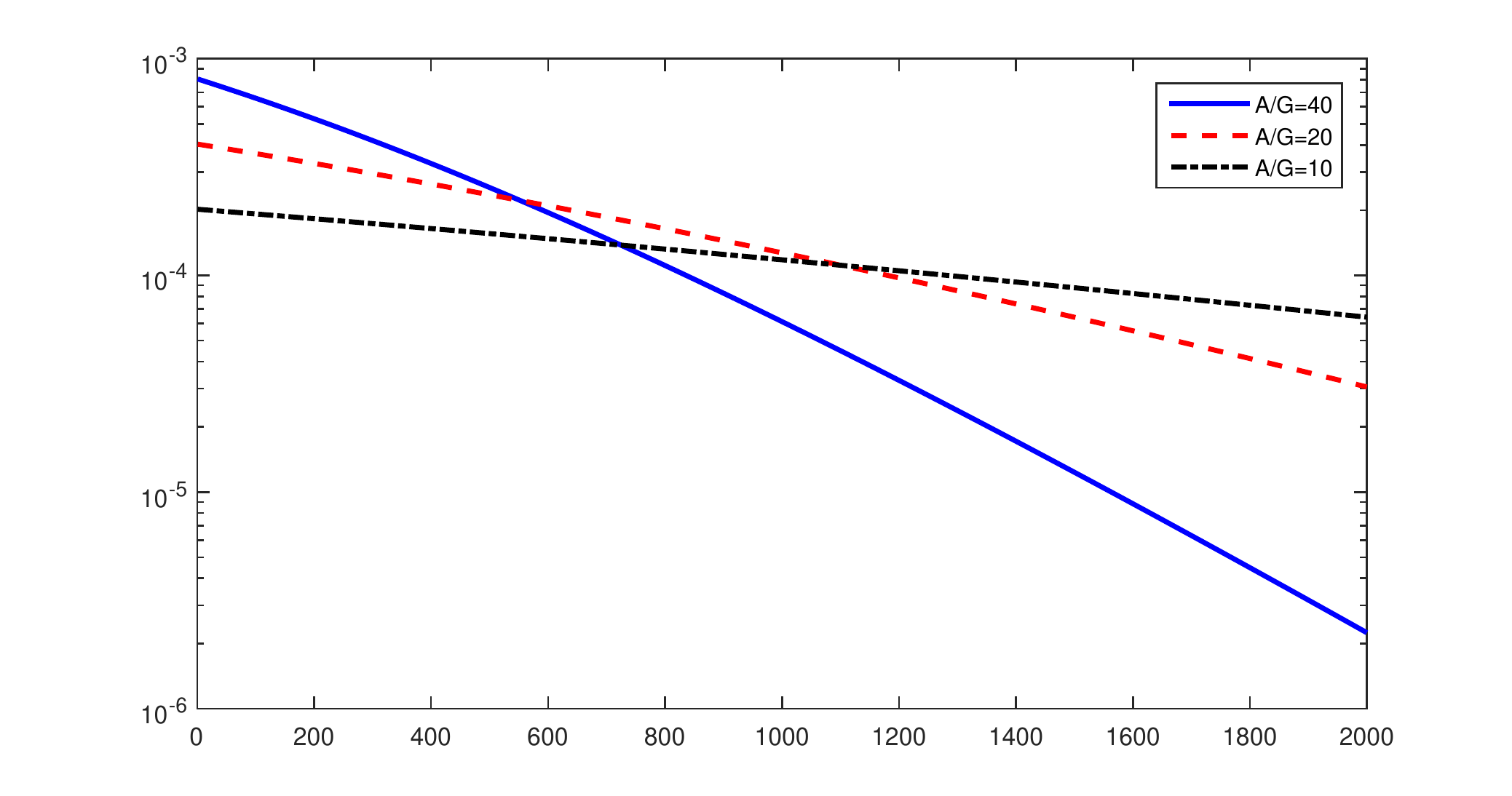}
	\caption{{\em Decay of the singular values of $S^{\mathrm{conv}}$.} The first 2000 singular values of $S^{\mathrm{conv}}$ discretized using $N=5001$ sampling points 
         for different
		 field of view widths $A/G.$
		 We view these singular values as a good approximation of the singular values of $S^{\mathrm{conv}}.$
		 From the semi-logarithmic plot we observe a decay of the singular values that
		 complements our analytic findings on expontial decay in Proposition \ref{prop:exponentialdecay}. 	
		 } \label{fig:DecaySingSconv}
\end{figure}

\begin{figure}
	\setlength{\figureheight}{5cm} \setlength{\figurewidth}{0.75\textwidth}
	%\centering{\scriptsize{
	%      \input{Example4_5000.tex}
	%}}
	\includegraphics[width=1\textwidth]{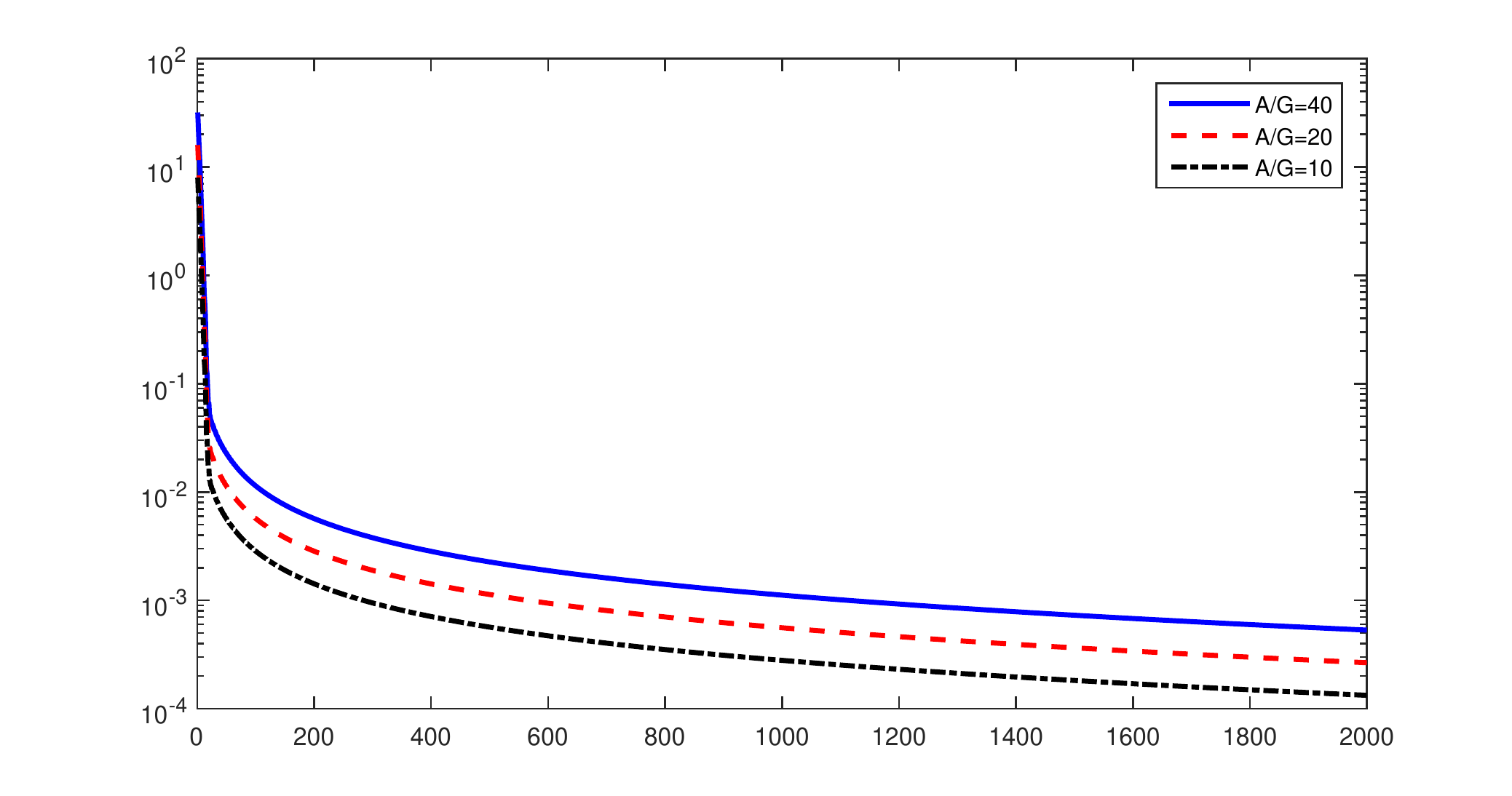}
	\caption{
		The first 2000 singular values of $S^{\mathrm{t}}$ discretized using $N=5001$ sampling points for different
		field of view widths $A/G.$
		We view these singular values as a good approximation of the singular values of $S^{\mathrm{t}}.$
		The semi-logarithmic plot shows that the singular values decrease very fast initially but then have 
a slower decay than that of the approximation of $S^{\mathrm{conv}}$ in Figure~\ref{fig:DecaySingSconv}.	
		}
	\label{fig:DecaySingSt}
\end{figure}

We now complement our analytic investigations by some numerical studies.
Here, we are particularly interested in the singular value decomposition of the MPI forward operator $S^{\mathrm{t}}$ and,
as its central part, of the core operator $S^{\mathrm{conv}}$ which describes the concatenation of the convolution operator and the restriction operator to the field of view.

Due to the concatenation of differently shaped operators in $S^{\mathrm{t}}$ and $S^{\mathrm{conv}}$, the explicit structure of the singular value decomposition --beyond our statement on the exponential decay of $S^{\mathrm{conv}}$ in 
Proposition \ref{prop:exponentialdecay}-- is analytically hard to access.

In the following we are going to access the singular values of $S^{\mathrm{conv}}$ and $S_1^{\mathrm{t}}$ numerically. 
To this end we have to discretize the corresponding operators. To discretize the field of view $[-A/G, A/G]$, we use a grid of $N$
equidistant points on which function values are sampled. The operator $S^{\mathrm{conv}}$ is then implemented as a discrete $N \times N$ convolution 
matrix acting on vectors of length $N$. A corresponding realization for $N = 120$ is illustrated in Figure~\ref{fig:Stime}. 
Also for the time domain an equidistant grid is used. To illustrate the entire period of the signal $u$, we use $[0,T]$
as underlying time domain and sample this interval on an equidistant larger grid with $K N$, $K \in \Nn$, points. Since
the space variable $y$ is related to the time variable by $y = \frac{A}{G} \cos \omega_0 t$, we regrid the rows of the discretized
matrix $S^{\mathrm{conv}}$ using linear interpolation. Then, the discretized time operator $S_1^{\mathrm{t}}$ is obtained by
multiplying the values of the function $\gamma_{1,A}'$ at the time sampling points. A realization of $S_1^{\mathrm{t}}$ for $N = 120$ and
$K = 2$ is shown in Figure~\ref{fig:Stime}.

We recall that, with increasing the resolution $N$ of the discretization, the singular values of the discretized operators
converge toward the singular value of the integral operators (which, in our case, are compact operators.)
More precisely, we order the eigenvalues of the integral operator and its discretization by magnitude and denote them by $\sigma_m$
for the integral operator and by $\sigma_m^{(l)}$ for the $l$th level discretization, respectively.
Then,
\begin{align}
	\sigma_m^{(l)}   \to  \sigma_m    \qquad \text { as } \qquad l \to \infty.
\end{align}

In Figure~\ref{fig:ApproxSingSconv} we illustrate this convergence for the particular instance of the operator $S^{\mathrm{conv}}.$
We display discretizations of $S^{\mathrm{conv}}$ using $N=1251,1667,$ $2501,5001$ sample points, gradient strength $G = 5000$ and 
two different sizes for the
field of view $A/G=125$ and $A/G=250.$ We observe faster approximation for smaller ratio $A/G.$	

In Figure~\ref{fig:ApproxSingSt} we perform an analogous experiment for the MPI operator $S^{\mathrm{t}}:$
we illustrate the approximation of $S^{\mathrm{t}}$ using $N=1251,1667,2501,5001$ sample points, $G = 5000$ and two different interval widths
$A/G=125$ and $A/G=250$ as above. For the time discretization we used $4N$ sample points for each instance of $N,$ respectively. 
We observe that the first singular values are decreasing extremely fast. Further, we observe that 
there are some very small singular values for each discretization.
Further experiments (we omit at this place) suggest that the number of these small singular values in the discretization increases when the number of time discretization points is reduced.

We observed that, for the presented examples, the first $2000$ singular values of the discretizations using 
$N=5001$ sample points rather well-approximate the first $2000$ singular values of $S^{\mathrm{conv}}$ and $S^{\mathrm{t}},$ respectively.
Therefore, in Figure~\ref{fig:DecaySingSconv},
we display the largest 2000 singular values of $S^{\mathrm{conv}}$ using $N=5001$ sampling points for discretization.
We use $G = 5000$ and the ratios $A/G =40; A/G =20; A/G = 10$ for the field of view.
Viewing these singular values as an approximation of the singular values of $S^{\mathrm{conv}},$
the semi-logarithmic plot displays a rapid initial decay of the singular values
complementing our analytic findings on expontial decay.	

In Figure~\ref{fig:DecaySingSt}, we display
	the first 2000 singular values of $S^{\mathrm{t}}$ discretized using $N=5001$ sampling points for different
		field of view lengths $A/G=40; A/G =20; A/G = 10$ and gradient strength $G = 5000$. 
		We view them as a good approximation of the largest 2000 singular values of $S^{\mathrm{t}}.$
		We see that the singular values decrease very fast initially.
	 However, comparing with Figure~\ref{fig:DecaySingSconv} we observe a slower decay than that of the approximation of $S^{\mathrm{conv}}$.

\section{Conclusion and Discussion}\label{sec:Conclusion}

In this work, we have analyzed the continuous 1D MPI imaging operator from a mathematical point of view.
We have derived a mathematical setup for the MPI reconstruction problem based on a linear operator equation between Hilbert spaces.
We have factorized the MPI imaging operator into basic building blocks and analyzed these building blocks systematically. 
In turn, we used this decomposition to analyse the forward 1D MPI operator. 
We were able to conclude that the reconstruction problem in MPI is severely ill-posed in the sense of inverse problems. In particular, as a major source for the ill-posedness of the 
problem we identified the convolution structure inside the imaging kernel. Restricted to the field of view this ill-posedness is reflected in an exponential decay of the 
singular values of the composed convolution-restriction operator. The exponential decay  herein can be attributed to the analycity 
of the Langevin function modelling the 1D particle magnetization. For the standard cosine trajectory used in 1D MPI, a second source of ill-posedness is given by 
the multiplication with a vanishing velocity at the boundaries of the field of view. The asymptotic exponential decay of the singular values of the core convolution operator was derived 
analytically, whereas final numerical experiments suggest a fast decay of the singular values also for the entire composed MPI imaging operator. 

In two and three spatial dimensions, the MPI forward operator can be decomposed similarly, but the single building blocks are more involved. For instance, in the multivariate setup, the MPI core operator consists of a matrix-valued convolution operator compared with an orinary convolution operator in 1D; cf. \cite{marz2016model} for an analysis of the MPI core operator in two and three spatial dimensions. Another difference arises from the involved signal generating trajectories. 
In a higher dimensional setup, the data are measured along a one-dimensional curve within a higher-dimensional field of view is covered. Hence, the scan along a trajectory only yields partial information whereas in a univariate situation the trajectory covers 
the whole univariate field of view. 
Suming up, the findings obtained in this article do not directly carry over to the higher-dimensional setup but, however, they may serve as a guide how to deal with their counterparts in the more involved multivariate setup. 

\section*{Acknowledgements}
All authors of this article were involved in the activities of the DFG-funded scientific network MathMPI
and thank the German Research Foundation for the support (ER777/1-1).
Wolfgang Erb and Andreas Weinmann thank Karlheinz Gr\"ochenig 
for a valuable discussion at the Dolomites Workshop on Approximation 2017 and the Rete Italiana di Approssimazione (RITA) for supporting a research visit in Padova.
Martin Storath acknowledges support by the German Research Foundation DFG (STO1126/2-1).
Andreas Weinmann acknowledges support by the German Research Foundation DFG (WE5886/4-1, WE5886/3-1).

{\small

}


\begin{thebibliography}{10}

\bibitem{adams2003sobolev}
R.~Adams and J.~Fournier.
\newblock {\em Sobolev spaces}.
\newblock Academic press, 2003.

\bibitem{Ahlborg2015b}
M.~Ahlborg and T.~Kaethner, C. und~Buzug.
\newblock Simultaneous patch reconstruction in magnetic particle imaging.
\newblock In {\em 5th International Workshop on Magnetic Particle Imaging
  (IWMPI 2015)}, 2015.

\bibitem{Bente2015}
K.~Bente, M.~Weber, M.~Graeser, T.~Sattel, M.~Erbe, and T.~Buzug.
\newblock Electronic field free line rotation and relaxation deconvolution in
  magnetic particle imaging.
\newblock {\em IEEE Transactions on Medical Imaging}, 34(2):644--651, 2015.

\bibitem{bertero1998introduction}
M.~Bertero and P.~Boccacci.
\newblock {\em Introduction to Inverse Problems in Imaging}.
\newblock CRC press, Boca Raton, 1998.

\bibitem{Croft2012}
L.~Croft, P.~Goodwill, and S.~Conolly.
\newblock Relaxation in x-space magnetic particle imaging.
\newblock {\em Physics in Medicine \& Biology}, 31(12):2335--2342, 2012.

\bibitem{DeMarchiErbMarchetti2017}
S.~De~Marchi, W.~Erb, and F.~Marchetti.
\newblock {Spectral filtering for the reduction of the Gibbs phenomenon for
  polynomial approximation methods on Lissajous curves with applications in
  MPI}.
\newblock {\em Dolomites Res. Notes Approx.}, 10:128--137, 2017.

\bibitem{DenckerErb2017}
P.~Dencker and W.~Erb.
\newblock {Multivariate polynomial interpolation on Lissajous-Chebyshev nodes}.
\newblock {\em J. Appr. Theory}, 219:15--45, 2017.

\bibitem{Engl_Regularization_of_Inverse_Problems}
H.~Engl, M.~Hanke, and A.~Neubauer.
\newblock {\em Regularization of {I}nverse {P}roblems}.
\newblock Kluwer Academic Publishers, Dordrecht, 1996.

\bibitem{Erb2013}
W.~Erb.
\newblock An orthogonal polynomial analogue of the {L}andau-{P}ollak-{S}lepian
  time-frequency analysis.
\newblock {\em J. Approx. Theory}, 166:56--77, 2013.

\bibitem{ErbKaethnerAhlborgBuzug2016}
W.~Erb, C.~Kaethner, M.~Ahlborg, and T.~M. Buzug.
\newblock {Bivariate Lagrange interpolation at the node points of
  non-degenerate Lissajous curves}.
\newblock {\em Numer. Math.}, 133(1):685--705, 2016.

\bibitem{ErbKaethnerDenckerAhlborg2015}
W.~Erb, C.~Kaethner, P.~Dencker, and M.~Ahlborg.
\newblock {A survey on bivariate Lagrange interpolation on Lissajous nodes}.
\newblock {\em Dolomites Research Notes on Approximation}, 8:23--36, 2015.

\bibitem{ferguson2009optimization}
R.~Ferguson, K.~Minard, and K.~Krishnan.
\newblock Optimization of nanoparticle core size for magnetic particle imaging.
\newblock {\em Journal of magnetism and magnetic materials},
  321(10):1548--1551, 2009.

\bibitem{FreitagHofmann}
M.~Freitag and B.~Hofmann.
\newblock Analytical and numerical studies on the influence of multiplication
  operators for the ill-posedness of inverse problems.
\newblock {\em Journal of Inverse and Ill-posed Problems}, 13(2):123--148,
  2005.

\bibitem{GleichWeizenecker2005}
B.~Gleich and J.~Weizenecker.
\newblock {Tomographic imaging using the nonlinear response of magnetic
  particles}.
\newblock {\em Nature}, 435:1214--1217, 2005.

\bibitem{GleichWeizeneckerBorgert2008}
B.~Gleich, J.~Weizenecker, and J.~Borgert.
\newblock {Experimental results on fast 2D-encoded magnetic particle imaging}.
\newblock {\em Phys. Med. Biol.}, 53:N81--N84, 2008.

\bibitem{GoodwillConolly2010}
P.~Goodwill and S.~Conolly.
\newblock The {X}-space formulation of the magnetic particle imaging process:
  1-{D} signal, resolution, bandwidth, {SNR}, {SAR}, and magnetostimulation.
\newblock {\em IEEE Trans. Med. Imaging}, 29:1851--1859, 2010.

\bibitem{GoodwillConolly2011}
P.~Goodwill and S.~Conolly.
\newblock Multidimensional {X}-space magnetic particle imaging.
\newblock {\em IEEE Trans. Med. Imaging}, 30:1581--1590, 2011.

\bibitem{goodwill2012x}
P.~Goodwill, E.~Saritas, L.~Croft, T.~Kim, K.~Krishnan, D.~Schaffer, and
  S.~Conolly.
\newblock X-space mpi: magnetic nanoparticles for safe medical imaging.
\newblock {\em Advanced Materials}, 24(28):3870--3877, 2012.

\bibitem{GruenbaumLonghiPerlstadt1982}
F.~A. Gruenbaum, L.~Longhi, and M.~Perlstadt.
\newblock Differential operators commuting with finite convolution integral
  operators: Some non-abelian examples.
\newblock {\em SIAM J. Appl. Math.}, 42:941--955, 1982.

\bibitem{Gruettner_etal2011}
M.~Gr\"uttner, M.~Graeser, S.~Biederer, T.~Sattel, H.~Wojtczyk, W.~Tenner,
  T.~Knopp, B.~Gleich, J.~Borgert, and T.~Buzug.
\newblock 1{D}-image reconstruction for magnetic particle imaging using a
  hybrid system function.
\newblock In {\em Proceedings of the IEEE Nuclear Science Symposium and Medical
  Imaging Conference}, pages 2545--2548, 2011.

\bibitem{Gruettner_etal2013}
M.~Gr{\"u}ttner, T.~Knopp, J.~Franke, M.~Heidenreich, J.~Rahmer, A.~Halkola,
  C.~Kaethner, J.~Borgert, and T.~M. Buzug.
\newblock On the formulation of the image reconstruction problem in magnetic
  particle imaging.
\newblock {\em Biomedical Engineering}, 58(6):583--591, 2013.

\bibitem{Kaethner2016IEEE}
C.~Kaethner, W.~Erb, M.~Ahlborg, P.~Szwargulski, T.~Knopp, and T.~M. Buzug.
\newblock Non-equispaced system matrix acquisition for magnetic particle
  imaging based on lissajous node points.
\newblock {\em IEEE Trans. Med. Imag.}, 35(11):2476--2485, 2016.

\bibitem{kirsch2011introduction}
A.~Kirsch.
\newblock {\em An Introduction to the Mathematical Theory of Inverse Problems},
  volume 120.
\newblock Springer, Heidelberg, 2011.

\bibitem{KnoppDiss2010}
T.~Knopp.
\newblock {\em Effiziente Rekonstruktion und alternative Spulentopologien
  f{\"u}r Magnetic Particle Imaging}.
\newblock {Dissertation}, University of L{\"u}beck, 2010.

\bibitem{knopp2011prediction}
T.~Knopp, S.~Biederer, T.~Sattel, M.~Erbe, and T.~Buzug.
\newblock Prediction of the spatial resolution of magnetic particle imaging
  using the modulation transfer function of the imaging process.
\newblock {\em IEEE Transactions on Medical Imaging}, 30(6):1284--1292, 2011.

\bibitem{KnoppBiederer_etal2010}
T.~Knopp, S.~Biederer, T.~Sattel, J.~Rahmer, J.~Weizenecker, B.~Gleich,
  J.~Borgert, and T.~M. Buzug.
\newblock {2D model-based reconstruction for magnetic particle imaging}.
\newblock {\em Medical Physics}, 37:485--491, 2010.

\bibitem{Knoppetal2009}
T.~Knopp, S.~Biederer, T.~Sattel, J.~Weizenecker, B.~Gleich, J.~Borgert, and
  T.~M. Buzug.
\newblock Trajectory analysis for magnetic particle imaging.
\newblock {\em Phys. Med. Biol.}, 54:385--397, 2009.

\bibitem{BuzugKnopp2012}
T.~Knopp and T.~M. Buzug.
\newblock {\em Magnetic Particle Imaging: An Introduction to Imaging Principles
  and Scanner Instrumentation}.
\newblock Springer, 2012.

\bibitem{Knopp_etal2011hm}
T.~Knopp, M.~Erbe, T.~Sattel, S.~Biederer, and T.~M. Buzug.
\newblock {A Fourier slice theorem for magnetic particle imaging using a
  field-free line}.
\newblock {\em Inverse Problems}, 27:095004, 2011.

\bibitem{Knopp2017}
T.~Knopp, N.~Gdaniek, and M.~M\"oddel.
\newblock Magnetic particle imaging: from proof of principle to preclinical
  applications.
\newblock {\em Physics in Medicine \& Biology}, 62(14):R124, 2017.

\bibitem{KnoppIWMPI2016a}
T.~Knopp, C.~Kaethner, M.~Ahlborg, and T.~M. Buzug.
\newblock X-space and chebyshev reconstruction in magnetic particle imaging: A
  first experimental comparison.
\newblock In {\em 6th International Workshop on Magnetic Particle Imaging
  (IWMPI 2016)}, page~75, 2016.

\bibitem{knopp2010weighted}
T.~Knopp, J.~Rahmer, T.~Sattel, S.~Biederer, J.~Weizenecker, B.~Gleich,
  J.~Borgert, and T.~Buzug.
\newblock Weighted iterative reconstruction for magnetic particle imaging.
\newblock {\em Physics in Medicine and Biology}, 55(6):1577, 2010.

\bibitem{Knopp_etal2010ec}
T.~Knopp, J.~Rahmer, T.~Sattel, S.~Biederer, J.~Weizenecker, B.~Gleich,
  J.~Borgert, and T.~M. Buzug.
\newblock {Weighted iterative reconstruction for magnetic particle imaging}.
\newblock {\em Phys. Med. Biol.}, 55:1577--1589, 2010.

\bibitem{KnoppSattel_etal2010}
T.~Knopp, T.~Sattel, S.~Biederer, J.~Rahmer, J.~Weizenecker, B.~Gleich,
  J.~Borgert, and T.~M. Buzug.
\newblock {Model-Based Reconstruction for magnetic particle imaging}.
\newblock {\em IEEE Trans. Med. Imaging}, 29:12--18, 2010.

\bibitem{KnoppThem2015}
T.~Knopp, K.~Them, K.~Kaul, and N.~Gdaniec.
\newblock Joint reconstruction of non-overlapping magnetic particle imaging
  focus-field data.
\newblock {\em Physics in Medicine \& Biology}, 60(8):L15, 2015.

\bibitem{konkle2011development}
J.~Konkle, P.~Goodwill, and S.~Conolly.
\newblock Development of a field free line magnet for projection {MPI}.
\newblock In {\em SPIE Medical Imaging}, page 79650X, 2011.

\bibitem{SPECT}
D.~Kuhl and R.~Edwards.
\newblock Image separation radioisotope scanning.
\newblock {\em Radiology}, 80:653--662, 1963.

\bibitem{Lampe_etal2012}
J.~Lampe, C.~Bassoy, J.~Rahmer, J.~Weizenecker, H.~Voss, B.~Gleich, and
  J.~Borgert.
\newblock {Fast reconstruction in magnetic particle imaging}.
\newblock {\em Phys. Med. Biol.}, 57:1113--1134, 2012.

\bibitem{louis1989inverse}
A.~Louis.
\newblock {\em Inverse und schlecht gestellte Probleme}.
\newblock Teubner, Stuttgart, 1989.

\bibitem{Lu2013}
K.~Lu, P.~W. Goodwill, E.~U. Saritas, B.~Zheng, and S.~M. Conolly.
\newblock Linearity and shift invariance for quantitative magnetic particle
  imaging.
\newblock {\em IEEE Transactions on Medical Imaging}, 32(9):1565--1575, 2013.

\bibitem{marz2016model}
T.~M{\"a}rz and A.~Weinmann.
\newblock Model-based reconstruction for magnetic particle imaging in 2d and
  3d.
\newblock {\em Inverse Problems and Imaging}, 10(4), 2016.

\bibitem{Panagiotopoulos2015}
N.~Panagiotopoulos, R.~Duschka, M.~Ahlborg, G.~Bringout, C.~Debbeler,
  M.~Graeser, C.~Kaethner, K.~L{\"u}dtke-Buzug, H.~Medimagh, J.~Stelzner,
  T.~Buzug, J.~Barkhausen, F.~Vogt, and J.~Haegele.
\newblock Magnetic particle imaging - current developments and future
  directions.
\newblock {\em International Journal of Nanomedicine}, 10:3097--3114, 2015.

\bibitem{Rahemeretal2009}
J.~Rahmer, J.~Weizenecker, B.~Gleich, and J.~Borgert.
\newblock Signal encoding in magnetic particle imaging: properties of the
  system function.
\newblock {\em BMC Medical Imaging}, 9:4, 2009.

\bibitem{Rahmer_etal2012}
J.~Rahmer, J.~Weizenecker, B.~Gleich, and J.~Borgert.
\newblock {Analysis of a 3-D system function measured for magnetic particle
  imaging}.
\newblock {\em IEEE Trans. Med. Imaging}, 31(6):1289--1299, 2012.

\newpage

\bibitem{saritas2013magnetic}
E.~Saritas, P.~Goodwill, L.~Croft, J.~Konkle, K.~Lu, B.~Zheng, and S.~Conolly.
\newblock Magnetic particle imaging ({MPI}) for {NMR} and {MRI} researchers.
\newblock {\em Journal of Magnetic Resonance}, 229:116--126, 2013.

\bibitem{sattel2009single}
T.~Sattel, T.~Knopp, S.~Biederer, B.~Gleich, J.~Weizenecker, J.~Borgert, and
  T.~Buzug.
\newblock Single-sided device for magnetic particle imaging.
\newblock {\em Journal of Physics D: Applied Physics}, 42(2):022001, 2009.

\bibitem{Schomberg2010}
H.~Schomberg.
\newblock {Magnetic particle imaging: model and reconstruction}.
\newblock In {\em 2010 IEEE International Symposium on Biomedical Imaging: From
  Nano to Macro}, pages 992--995, 2010.

\bibitem{SlepianPollak1961}
D.~Slepian and H.~O. Pollak.
\newblock {Prolate spheroidal wave functions, Fourier analysis and uncertainty,
  I.}
\newblock {\em Bell System Tech. J.}, 40:43--63, 1961.

\bibitem{storath2017edge}
M.~Storath, C.~Brandt, M.~Hofmann, T.~Knopp, J.~Salamon, A.~Weber, and
  A.~Weinmann.
\newblock Edge preserving and noise reducing reconstruction for magnetic
  particle imaging.
\newblock {\em IEEE transactions on medical imaging}, 36(1):74--85, 2017.

\bibitem{PET}
M.~Ter-Pogossian, M.~Phelps, E.~Hoffman, and N.~Mullani.
\newblock A positron-emission transaxial tomograph for nuclear imaging
  {(PETT)}.
\newblock {\em Radiology}, 114:89--98, 1975.

\bibitem{Gladiss2017}
A.~von Gladiss, M.~Graeser, P.~Szwargulski, T.~Knopp, and T.~M. Buzug.
\newblock Hybrid system calibration for multidimensional magnetic particle
  imaging.
\newblock {\em Physics in Medicine \& Biology}, 62(9):3392--3406, 2017.

\bibitem{WeizeneckerBorgertGleich2007}
J.~Weizenecker, J.~Borgert, and B.~Gleich.
\newblock {A simulation study on the resolution and sensitivity of magnetic
  particle imaging}.
\newblock {\em Phys. Med. Biol.}, 52:6363--6374, 2007.

\bibitem{Weizenecker_etal2009}
J.~Weizenecker, B.~Gleich, J.~Rahmer, H.~Dahnke, and J.~Borgert.
\newblock {Three-dimensional real-time in vivo magnetic particle imaging}.
\newblock {\em Phys. Med. Biol.}, 54:L1--L10, 2009.

\bibitem{weizenecker2009three}
J.~Weizenecker, B.~Gleich, J.~Rahmer, H.~Dahnke, and J.~Borgert.
\newblock Three-dimensional real-time in vivo magnetic particle imaging.
\newblock {\em Physics in Medicine and Biology}, 54(5):L1, 2009.

\bibitem{Widom1964}
H.~Widom.
\newblock Asymptotic behavior of the eigenvalues of certain integral equations
  ii.
\newblock {\em Archive for Rational Mechanics and Analysis}, 17(3):215--229,
  1964.

\bibitem{zheng2013quantitative}
B.~Zheng, T.~Vazin, W.~Yang, P.~Goodwill, E.~Saritas, L.~Croft, D.~Schaffer,
  and S.~Conolly.
\newblock Quantitative stem cell imaging with magnetic particle imaging.
\newblock In {\em IEEE International Workshop on Magnetic Particle Imaging},
  page~1, 2013.

\end{thebibliography}
\end{document}